\newtheorem{Theorem}{Theorem}[section]
\newtheorem{lemma}{Lemma}[section]
\newtheorem{proposition}{Proposition}[section]
\theoremstyle{remark}
\newtheorem{remark}{Remark}[section]
\newcommand{\argmin}{{\rm argmin}\kern 0.12em}
\begin{document}

\title{FAST  INERTIAL DYNAMICS AND FISTA ALGORITHMS IN CONVEX OPTIMIZATION.
PERTURBATION ASPECTS.}

\author{Hedy Attouch}

\address{Institut de Math\'ematiques et Mod\'elisation de Montpellier, UMR 5149 CNRS, Universit\'e Montpellier 2, place Eug\`ene Bataillon,
34095 Montpellier cedex 5, France}
\email{hedy.attouch@univ-montp2.fr}

\author{Zaki Chbani}
\address{Laboratoire IBN AL-BANNA de Math\'ematiques et applications (LIBMA),
Cadi Ayyad university, Faculty of Sciences Semlalia, Mathematics, 40000 Marrakech, Morroco}
\email{chbaniz@uca.ma}

\date{July 6, 2015}


\keywords{Convex optimization, fast gradient methods, 
inertial dynamics,  vanishing viscosity, Nesterov method, FISTA algorithm, perturbations, errors}

\begin{abstract}
In a Hilbert space setting $\mathcal H$,  we study the  fast convergence properties as $t \to + \infty$ of the trajectories of  the second-order differential equation
\begin{equation*}
 \ddot{x}(t) + \frac{\alpha}{t} \dot{x}(t) + \nabla \Phi (x(t)) = g(t),
\end{equation*}
where $\nabla\Phi$ is the gradient of a convex continuously differentiable function $\Phi: \mathcal H \to  \mathbb R$, $\alpha$ is a positive parameter, and $g:   [t_0, + \infty[ \rightarrow \mathcal H$ is a "small" perturbation term. 
In this damped inertial system, the viscous  damping coefficient $\frac{\alpha}{t}$ vanishes asymptotically, but not too rapidly.
 For $\alpha \geq 3$, and $\int_{t_0}^{+\infty} t \|g(t)\| dt < + \infty$, just assuming that $\argmin \Phi \neq \emptyset$, we show that any  trajectory of the above system satisfies the fast convergence property  
\begin{align*}
  \Phi(x(t))-  \min_{\mathcal H}\Phi \leq \frac{C}{t^2}.
\end{align*}
For $\alpha > 3$,  we show that any trajectory converges weakly to a minimizer of $\Phi$, and we show the strong convergence property in various practical situations.
This complements the results obtained  by Su-Boyd- Cand\`es, and Attouch-Peypouquet-Redont,
in the unperturbed case $g=0$. 
The parallel study of the time discretized version of this system provides  new insight on the effect of errors, or perturbations on   Nesterov's type algorithms. We obtain fast convergence of the values, and convergence of the trajectories  for a perturbed version of the variant of FISTA recently considered by Chambolle-Dossal, and Su-Boyd-Cand\`es.

\end{abstract}

\thanks{With the support of  ECOS  grant C13E03,
Effort sponsored by the Air Force Office of Scientific Research, Air Force Material Command, USAF, under grant number FA9550-14-1-0056.}
\maketitle

\vspace{0.3cm}

\section{Introduction}
Throughout the paper, $\mathcal H$ is a real Hilbert space which is endowed with the scalar product $\langle \cdot,\cdot\rangle$, with $\|x\|^2= \langle x,x\rangle    $ for any $x\in \mathcal H$.
Let $\Phi : \mathcal H \rightarrow \mathbb R$ be a convex differentiable function,  whose gradient   $\nabla \Phi$ is Lipschitz continuous on  bounded sets. We suppose that   $S=\argmin \Phi$ is nonempty. Let us give $\alpha$   a positive parameter.
We are going to study the asymptotic behaviour (as $t \to + \infty$) of the trajectories of the second-order differential equation 
\begin{equation}\label{edo01}
 \mbox{(AVD)}_{\alpha,g} \quad \quad \ddot{x}(t) + \frac{\alpha}{t} \dot{x}(t) + \nabla \Phi (x(t)) = g(t)
\end{equation}
and consider similar questions for the corresponding algorithms.
 Let us give some $t_0 >0$. The second-member $g:   [t_0, + \infty[ \rightarrow \mathcal H$ is  a perturbation term (integrable source term), such that $g(t)$ is small for  large $t$. Precisely, in our
main result,  Theorem \ref{fastconv-thm},  assuming that $\alpha \geq 3$, and $\int_{t_0}^{+\infty} t \|g(t)\| dt < + \infty$, we  show that any  trajectory of (\ref{edo01}) satisfies the fast convergence property  
\begin{align}\label{basic-fast}
  \Phi(x(t))-  \min_{\mathcal H}\Phi \leq \frac{C}{t^2}.
\end{align}
This extends the fast convergence of the values  obtained by Su, Boyd and 
Cand\`es in \cite{SBC} in the unperturbed case $g=0$. 
In Theorem \ref{Thm-weak-conv},  when   $\alpha > 3$, we show that any  trajectory of (\ref{edo01})
converges weakly to a minimizer of $\Phi$, which extends  
 the convergence result obtained by Attouch, Peypouquet, and Redont in \cite{APR1} in the case $g=0$. 
  
This inertial system involves a  viscous damping which is attached to the term $\frac{\alpha}{t} \dot{x}(t)$. It is an isotropic linear damping
with a viscous parameter $\frac{\alpha}{t}$ which vanishes asymptotically, but not too rapidly. The asymptotic behaviour of the inertial gradient-like system 
\begin{equation}\label{edo2}
\mbox{(AVD)} \quad \quad \ddot{x}(t) + a(t) \dot{x}(t)  + \nabla \Phi (x(t)) = 0,
\end{equation}
with Asymptotic Vanishing Damping ((AVD) for short), has been studied by Cabot, Engler  and Gaddat in \cite{CEG1}-\cite{CEG2}. As a main result, they proved that, under moderate decrease of $a(\cdot)$ to zero, 
i.e., $a(t) \to 0$ as $t \to +\infty$ with $\int_0^{\infty} a(t) dt = + \infty$, then for any trajectory $x(\cdot)$ of (\ref{edo2})
\begin{equation}\label{edo3}
 \Phi (x(t)) \to \min_{\mathcal H}\Phi.
\end{equation}  
 As a striking property, for the specific choice $a(t)= \frac{\alpha}{t}$, with $\alpha \geq 3$ , for example when considering 
\begin{equation}\label{edo4}
\ddot{x}(t) + \frac{3}{t} \dot{x}(t)  + \nabla \Phi (x(t)) = 0,
\end{equation}
 it has been proved by  Su, Boyd, and  Cand\`es in \cite{SBC}  that  the fast convergence property  of the values (\ref{basic-fast}) is satisfied by the trajectories of 
 (\ref{edo4}).
In the same article \cite{SBC}, the authors show that (\ref{edo4}) can be seen as a continuous version of the  fast convergent method of Nesterov, see \cite{Nest1}-\cite{Nest2}-\cite{Nest3}-\cite{Nest4}.
For the continuous dynamic, a related study concerning the case $a(t)= \frac{1}{t^{\theta}}$, $0<\theta <1$ has been developed by Jendoubi and May in \cite{JM}, with roughly speaking $\mathcal O (\frac{1}{t^{1+ \theta}})$ convergence. The analysis developped in \cite{JM} does not contain the case $a(t)= \frac{\alpha}{t}$, where the introduction of an additional scaling, due to the coefficient $\alpha$, requires a specific analysis. That's our main concern in this paper.

Our results provide new insight on the effect of perturbations or errors in the associated algorithms. They provide a guideline for the study of the preservation, under small perturbations, of the fast convergence property of the corresponding Nesterov type algorithms.
Specifically we consider a perturbed version of the variant of FISTA recentely considered by Chambolle and Dossal \cite{CD},  and Su, Boyd and 
Cand\`es  \cite{SBC}.
We obtain fast convergence of the values in the case $\alpha \geq 3$, and convergence of the trajectories in the case $\alpha > 3$.
Convergence of the trajectories in the case $\alpha =3$,  which corresponds to Nesterov algorithm, is still an open question.
\section{Fast Convergence of the values}
Let $\Phi : \mathcal H \rightarrow \mathbb R$ be a convex function,  whose gradient   $\nabla \Phi$ is Lipschitz continuous on  bounded sets.
Let $t_0 >0$, $\alpha >0$, and  $g:   [t_0, + \infty[ \rightarrow \mathcal H$ such that $\displaystyle{\int_{t_0}^{+\infty} \|g(t)\| dt < + \infty}$.
We consider the second-order differential equation
\begin{equation}\label{energy-00}
 \mbox{{\rm(AVD)}}_{\alpha, g}  \quad \ddot{x}(t) + \frac{\alpha}{t} \dot{x}(t) + \nabla \Phi (x(t)) = g(t).\quad\quad
\end{equation}
From Cauchy-Lipschitz theorem, for any Cauchy data $x(t_0) =x_0 \in \mathcal H, \  \dot{x}(t_0)= x_1 \in \mathcal H    $ we immediately infer the existence and uniqueness of a local solution to (\ref{energy-00}).
The global existence follows from the  energy estimate proved in Proposition \ref{energy-thm-1}, in  the next paragraph.
Throughout this paper we will use the following Gronwall-Bellman lemma, see  \cite[Lemme A.5]{Bre1} for a proof.

\begin{lemma}\label{GB-lemma}
Let $m\in L^1 (t_0,T; \mathbb R)$ such that $m \geq 0$ a.e. on $]t_0,T[$ and let \ $c$ \ be a nonnegative constant.
Suppose that $w$ is a continuous function from $[t_0,T]$
into $\mathbb R$ that satisfies, for all $t\in[t_0,T]$
$$
\frac{1}{2}w^2 (t) \leq 
\frac{1}{2} c^2 + \int_{t_0}^t m(\tau) w(\tau) d\tau .
$$
Then, for all $t\in[t_0,T]$
$$
|w(t| \leq c + \int_{t_0}^t m(\tau)  d\tau .
$$
\end{lemma}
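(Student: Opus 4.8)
The plan is to turn the integral inequality into a differential inequality for its right-hand side and then integrate. First I would introduce
\[
\phi(t) := \tfrac{1}{2}c^2 + \int_{t_0}^t m(\tau)\,w(\tau)\,d\tau, \qquad t\in[t_0,T].
\]
Since $w$ is continuous on the compact interval $[t_0,T]$, it is bounded there, and as $m\in L^1(t_0,T)$ we get $mw\in L^1(t_0,T)$; hence $\phi$ is absolutely continuous on $[t_0,T]$ with $\phi'(t)=m(t)w(t)$ for a.e.\ $t$. The hypothesis now reads $\tfrac{1}{2}w^2(t)\le\phi(t)$ for all $t$, which in particular forces $\phi\ge 0$ and gives $w(t)\le|w(t)|\le\sqrt{2\phi(t)}$.

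Next, using $m\ge 0$ together with $w(t)\le\sqrt{2\phi(t)}$, I obtain $\phi'(t)=m(t)w(t)\le m(t)\sqrt{2\phi(t)}$ for a.e.\ $t$. To be able to divide safely I regularize: for $\varepsilon>0$ set $\phi_\varepsilon:=\phi+\tfrac{1}{2}\varepsilon^2$, so that $\phi_\varepsilon\ge\tfrac{1}{2}\varepsilon^2>0$, $\phi_\varepsilon$ is absolutely continuous, and $\phi_\varepsilon'=\phi'\le m\sqrt{2\phi}\le m\sqrt{2\phi_\varepsilon}$ a.e. Because $\phi_\varepsilon$ is absolutely continuous and bounded away from zero, the composition $t\mapsto\sqrt{2\phi_\varepsilon(t)}$ is again absolutely continuous, and the chain rule yields, for a.e.\ $t$,
\[
\frac{d}{dt}\sqrt{2\phi_\varepsilon(t)} \;=\; \frac{\phi_\varepsilon'(t)}{\sqrt{2\phi_\varepsilon(t)}} \;\le\; m(t).
\]

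Then I would integrate this last inequality over $[t_0,t]$, using $\phi_\varepsilon(t_0)=\tfrac{1}{2}(c^2+\varepsilon^2)$, to get $\sqrt{2\phi_\varepsilon(t)}\le\sqrt{c^2+\varepsilon^2}+\int_{t_0}^t m(\tau)\,d\tau$. Since $|w(t)|\le\sqrt{2\phi(t)}\le\sqrt{2\phi_\varepsilon(t)}$, this gives $|w(t)|\le\sqrt{c^2+\varepsilon^2}+\int_{t_0}^t m(\tau)\,d\tau$ for every $t\in[t_0,T]$, and letting $\varepsilon\to 0^+$ produces the announced bound. The only genuinely delicate points are the measure-theoretic ones — verifying that $\phi$, and hence $\phi_\varepsilon$ and $\sqrt{2\phi_\varepsilon}$, is absolutely continuous so that the fundamental theorem of calculus and the chain rule may be applied a.e. — and the division by a $\phi$ that may vanish, which is exactly what the $\varepsilon$-shift is designed to circumvent; everything else is routine.
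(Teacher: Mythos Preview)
Your proof is correct. The paper does not actually prove this lemma: it simply cites \cite[Lemme A.5]{Bre1} (Br\'ezis) for a proof, and your argument---defining the right-hand side as an absolutely continuous function, bounding its derivative via $m\sqrt{2\phi}$, regularizing by $\varepsilon$ to avoid division by zero, and integrating---is precisely the classical proof one finds in that reference.
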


\subsection{Energy estimates}

The following estimates are obtained by considering the global energy of the system, and showing that it is a strict Lyapunov function.
\begin{proposition}\label{energy-thm-1}
Suppose $\alpha >0 $, and $\displaystyle{\int_{t_0}^{+\infty} \|g(t)\| dt < + \infty}$ .   Then, for any  orbit  $x: [t_0, +\infty[ \rightarrow \mathcal H$   of $ \mbox{{\rm(AVD)}}_{\alpha,g}$
\begin{align}
  &  \sup_t \|  \dot{x}(t)  \| < + \infty  ,   \label{et1}    \\ 
  & \int_{t_0}^{+\infty} \frac{1}{t} \| \dot{x}(t)  \|^2 dt < + \infty . \label{et2}   
\end{align}
Precisely, for any $t\geq t_0$
\begin{equation}\label{energy-001}
\| \dot{x}(t) \| \leq \| \dot{x}(t_0) \| + \sqrt{2}\left(  \Phi(x_0)-  \min_{\mathcal H}\Phi   \right) + \int_{t_0}^{\infty} \| g(\tau) \| d\tau ,
\end{equation}
\begin{equation}\label{energy-002}
\int_{t_0}^{\infty} \frac{\alpha}{\tau}  \| \dot{x}(\tau)  \|^2 d\tau \leq \frac{1}{2} \| \dot{x}(t_0) \|^2   + \left(  \Phi(x_0)-  \min_{\mathcal H}\Phi   \right) + 
\left( \| \dot{x}(t_0) \| + \sqrt{2}\left(  \Phi(x_0)-  \min_{\mathcal H}\Phi   \right) + \|g\|_{L^1 (t_0, \infty)} \right)  \|g\|_{L^1 (t_0, \infty)}  .
\end{equation}

\end{proposition}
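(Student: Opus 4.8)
The plan is to study the global energy of the system
\[
\mathcal{E}(t) := \tfrac{1}{2}\|\dot{x}(t)\|^2 + \Phi(x(t)) - \min_{\mathcal H}\Phi \;\geq\; 0,
\]
on the maximal interval of existence $[t_0,T_{\max}[$ of the local solution provided by Cauchy-Lipschitz, and to show that it is "almost" a Lyapunov function. Differentiating and using $\mbox{{\rm(AVD)}}_{\alpha,g}$ to substitute $\ddot{x}(t)+\nabla\Phi(x(t)) = g(t) - \frac{\alpha}{t}\dot{x}(t)$, one gets
\[
\dot{\mathcal{E}}(t) = \big\langle \dot{x}(t),\, \ddot{x}(t)+\nabla\Phi(x(t)) \big\rangle = -\frac{\alpha}{t}\|\dot{x}(t)\|^2 + \langle g(t),\dot{x}(t)\rangle .
\]
Since $g\in L^1_{\rm loc}$, $\dot{x}$ is only absolutely continuous and this identity is understood in the integrated sense; that is all we need.

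\emph{First step: the sup-bound \eqref{et1} and \eqref{energy-001}.} Discarding the nonpositive term $-\frac{\alpha}{t}\|\dot{x}\|^2$ and integrating, and using $\tfrac12\|\dot{x}(t)\|^2 \le \mathcal{E}(t)$, we obtain
\[
\tfrac{1}{2}\|\dot{x}(t)\|^2 \leq \mathcal{E}(t_0) + \int_{t_0}^t \|g(\tau)\|\,\|\dot{x}(\tau)\|\,d\tau ,
\]
which is exactly the hypothesis of the Gronwall-Bellman Lemma \ref{GB-lemma} with $w(t)=\|\dot{x}(t)\|$, $m(t)=\|g(t)\|$, and $c=\sqrt{2\mathcal{E}(t_0)}=\sqrt{\|\dot{x}(t_0)\|^2 + 2(\Phi(x_0)-\min_{\mathcal H}\Phi)}$. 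The lemma gives $\|\dot{x}(t)\| \leq c + \int_{t_0}^t\|g(\tau)\|\,d\tau$, and the elementary inequality $\sqrt{a^2+b^2}\le a+b$ for $a,b\ge 0$ together with $\|g\|_{L^1(t_0,\infty)}<\infty$ yields \eqref{et1} and the precise bound \eqref{energy-001}.

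\emph{Second step: the integral bound \eqref{et2} and \eqref{energy-002}, and global existence.} Now integrate the energy identity \emph{without} dropping the damping term and use $\mathcal{E}(t)\ge 0$:
\[
\int_{t_0}^t \frac{\alpha}{\tau}\|\dot{x}(\tau)\|^2\,d\tau = \mathcal{E}(t_0) - \mathcal{E}(t) + \int_{t_0}^t\langle g(\tau),\dot{x}(\tau)\rangle\,d\tau \leq \mathcal{E}(t_0) + \Big(\sup_{\tau\ge t_0}\|\dot{x}(\tau)\|\Big)\,\|g\|_{L^1(t_0,\infty)} .
\]
Substituting the bound of the first step for $\sup\|\dot{x}\|$ and letting $t\to T_{\max}$ gives \eqref{et2} and \eqref{energy-002}. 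Finally, \eqref{et1} implies $\|x(t)\|\le\|x_0\|+(t-t_0)\sup\|\dot{x}\|$, so $x$ and $\dot{x}$ stay bounded on every bounded subinterval of $[t_0,T_{\max}[$; since $\nabla\Phi$ is Lipschitz on bounded sets, the standard ODE continuation argument rules out $T_{\max}<\infty$, hence the orbit is global.

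The only genuine subtlety — and the reason we invoke Lemma \ref{GB-lemma} in its quadratic form rather than a naive linear Gronwall estimate — is that the perturbation enters through $\langle g,\dot{x}\rangle$, i.e.\ coupled to the very quantity $\|\dot{x}\|$ we wish to control. The two-step bootstrap (first extract $\sup\|\dot{x}\|$ from the quadratic Gronwall inequality, then feed it back into the integral of the energy identity) is what closes the argument.
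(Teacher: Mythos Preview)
Your proof is correct and follows essentially the same route as the paper: differentiate the global energy, obtain $\dot{\mathcal E}(t)+\frac{\alpha}{t}\|\dot x(t)\|^2=\langle g(t),\dot x(t)\rangle$, apply the quadratic Gronwall--Bellman Lemma~\ref{GB-lemma} to get the sup bound, and then feed that bound back into the integrated energy identity to get the weighted $L^2$ estimate. The only cosmetic difference is that the paper packages the perturbation term into a modified energy $W_T(t)=\mathcal E(t)+\int_t^T\langle\dot x,g\rangle\,d\tau$ so that $W_T$ is genuinely nonincreasing, whereas you keep $\langle g,\dot x\rangle$ on the right-hand side as a source term; unpacking $W_T(t)\le W_T(t_0)$ gives exactly your inequality.
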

\begin{proof} 
Let us give some $T >t_0$. For $t_0 \leq t \leq T$, let us define the energy function
\begin{equation}\label{energy-01}
W_T(t):= \frac{1}{2} \| \dot{x}(t) \|^2   + \left(  \Phi(x(t))-  \min_{\mathcal H}\Phi   \right)  +  
\int_t^T \langle   \dot{x}(\tau) , 
g(\tau) \rangle d\tau .
\end{equation}
Because of $\dot{x}$ continuous, and $g$ integrable, the energy function $W_T$ is well defined. After time derivation of $W_T$, and by using $ \mbox{{\rm(AVD)}}_{\alpha,g}$, we obtain
\begin{align*}
\dot{W_T} (t)&:=  \langle   \dot{x}(t),    \ddot{x}(t) +  \nabla \Phi (x(t)) - g(t) \rangle \\
&=   \langle  \dot{x}(t), -\frac{\alpha}{t}  \dot{x}(t)       \rangle ,
\end{align*}
that is
\begin{align}\label{energy-02}
\dot{W_T} (t) + \frac{\alpha}{t}  \| \dot{x}(t)  \|^2 \leq 0.
\end{align}
Hence $W_T(\cdot)$ is a decreasing function.
In particular, $ W_T(t) \leq W_T(t_0)$, i.e.,
$$
\frac{1}{2} \| \dot{x}(t) \|^2   + \left(  \Phi(x(t))-  \min_{\mathcal H}\Phi   \right)  +  
\int_t^T \langle   \dot{x}(\tau) , 
g(\tau) \rangle d\tau
 \leq \frac{1}{2} \| \dot{x}(t_0) \|^2   + \left(  \Phi(x_0)-  \min_{\mathcal H}\Phi   \right)  +  
\int_{t_0}^T \langle   \dot{x}(\tau) , 
g(\tau) \rangle d\tau . 
$$
As a consequence, 
$$
\frac{1}{2} \| \dot{x}(t) \|^2  \leq \frac{1}{2} \| \dot{x}(t_0) \|^2   + \left(  \Phi(x_0)-  \min_{\mathcal H}\Phi   \right)  + \int_{t_0}^t \|\dot{x}(\tau)\| \| g(\tau) \| d\tau.
$$
Applying Gronwall-Bellman lemma \ref{GB-lemma}, we obtain 
\begin{align*}
\| \dot{x}(t) \| &\leq \left(  \| \dot{x}(t_0) \|^2   + 2\left(  \Phi(x_0)-  \min_{\mathcal H}\Phi   \right) \right)^{\frac{1}{2}}  + \int_{t_0}^t \| g(\tau) \| d\tau \\
& \leq \| \dot{x}(t_0) \| + \sqrt{2}\left(  \Phi(x_0)-  \min_{\mathcal H}\Phi   \right) + \int_{t_0}^t \| g(\tau) \| d\tau .
\end{align*}
This being true for arbitrary $T>t_0$, and $t_0 \leq t \leq T$, we deduce that
\begin{equation}\label{energy-03}
\| \dot{x}(t) \| \leq \| \dot{x}(t_0) \| + \sqrt{2}\left(  \Phi(x_0)-  \min_{\mathcal H}\Phi   \right) + \int_{t_0}^{\infty} \| g(\tau) \| d\tau ,
\end{equation}
which gives (\ref{et1}) and (\ref{energy-001}).
As a consequence, the function $W$ (corresponding to 
$T= +\infty$) 
\begin{equation}\label{energy-04}
W(t):= \frac{1}{2} \| \dot{x}(t) \|^2   + \left(  \Phi(x(t))-  \min_{\mathcal H}\Phi   \right)  +  
\int_t^{\infty} \langle   \dot{x}(\tau) , 
g(\tau) \rangle d\tau ,
\end{equation}
is well defined, 
and is minorized by 
\begin{equation}\label{energy-05}
-\|  \dot{x} \|_{ L^{\infty} (t_0, + \infty)}  \int_{t_0}^{\infty} \| g(\tau) \| d\tau .
\end{equation}
By (\ref{energy-02}) we have
\begin{align}\label{energy-06}
\dot{W} (t) + \frac{\alpha}{t}  \| \dot{x}(t)  \|^2 \leq 0.
\end{align}
 Integrating (\ref{energy-06}) from $t_0$ to $t$, and using (\ref{energy-03}),   (\ref{energy-05}), we obtain
\begin{align*}
\int_{t_@}^{\infty} \frac{\alpha}{\tau}  \| \dot{x}(\tau)  \|^2 d\tau  & \leq \frac{1}{2} \| \dot{x}(t_0) \|^2   + \left(  \Phi(x(t_0))-  \min_{\mathcal H}\Phi   \right) + \|  \dot{x} \|_{ L^{\infty} (t_0, + \infty)}  \int_{t_0}^{\infty} \| g(\tau) \| d\tau < + \infty\\
& \leq \frac{1}{2} \| \dot{x}(t_0) \|^2   + \left(  \Phi(x(t_0))-  \min_{\mathcal H}\Phi   \right) + 
\left( \| \dot{x}(t_0) \| + \sqrt{2}\left(  \Phi(x_0)-  \min_{\mathcal H}\Phi   \right) + \|g\|_{L^1 (t_0, +\infty)} \right)  \|g\|_{L^1 (t_0, +\infty)} ,
\end{align*}
which gives (\ref{et2}) and (\ref{energy-002}).
\end{proof}

\subsection{The main result}

Let us state our main result.

\begin{Theorem}\label{fastconv-thm}
Suppose that $\alpha \geq3$, and $\displaystyle{\int_{t_0}^{+\infty} \tau \|g(\tau)\| d\tau < + \infty}$.   Then, for any orbit  $x: [t_0, +\infty[ \rightarrow \mathcal H$   of $ \mbox{{\rm(AVD)}}_{\alpha,g}$, we have the following fast convergence of the values: 
 $$ \Phi(x(t))-  \min_{\mathcal H}\Phi = \mathcal O  \left( \frac{1}{t^2}\right) .$$
Precisely 
\begin{align}\label{Liap-001}
 \frac{2}{\alpha-1}t^2 (\Phi(x(t))- \inf_{\mathcal H}\Phi ) \leq C+ 
 2 \left(\left( \frac{C}{\alpha-1}\right)^{ \frac{1}{2} } + \frac{1}{\alpha-1}
 \int_{t_{0}}^{\infty}  \tau \| g(\tau)   \| d\tau \right) \int_{t_{0}}^{\infty}  \tau \|g(\tau) \|   d\tau ,
\end{align}
with
$$
C= \frac{2}{\alpha-1}t^2 (\Phi(x_0)- \inf_{\mathcal H}\Phi )+ (\alpha-1) \| x_0 - x^{*} + \frac{t_0}{\alpha-1}  \dot{x}(t_0)  \|^2 .
$$
Moreover
\begin{align}\label{Liap-001-b}
\sup_{t \geq t_0}\| x(t) - x^{*} + \frac{t}{\alpha-1}  \dot{x}(t)  \| \leq \left( \frac{C}{\alpha-1}\right)^{ \frac{1}{2} } + \frac{1}{\alpha-1}
 \int_{t_{0}}^{\infty}  \tau \| g(\tau)   \| d\tau <+\infty .
\end{align}
\end{Theorem}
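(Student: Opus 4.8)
The plan is to run the Lyapunov analysis tailored to the scaling $\frac{\alpha}{t}$ (in the spirit of Su--Boyd--Cand\`es and Attouch--Peypouquet--Redont), keeping careful track of how the source term $g$ enters the energy. Fix $x^*\in S=\argmin\Phi$ and write $\Phi^*:=\min_{\mathcal H}\Phi=\Phi(x^*)$. For $t\ge t_0$ introduce the velocity-shifted point and the energy
\[
v(t):=x(t)-x^*+\tfrac{t}{\alpha-1}\,\dot x(t),\qquad
\mathcal E(t):=\tfrac{2t^{2}}{\alpha-1}\bigl(\Phi(x(t))-\Phi^*\bigr)+(\alpha-1)\,\|v(t)\|^{2}.
\]
Then $\mathcal E\ge 0$ and $\mathcal E(t_0)=C$; since the orbit is globally defined by Proposition \ref{energy-thm-1}, $\mathcal E$ is absolutely continuous on every $[t_0,T]$, so the differential inequalities below can legitimately be integrated.

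The first step is the identity obtained by differentiating $v$ and eliminating $\ddot x$ through $\mbox{{\rm(AVD)}}_{\alpha,g}$: the weight $\tfrac{t}{\alpha-1}$ is precisely the one for which the $\dot x$-contributions cancel, leaving $\dot v(t)=\tfrac{t}{\alpha-1}\bigl(g(t)-\nabla\Phi(x(t))\bigr)$. Differentiating $\mathcal E$ and using this, the cross term $-\tfrac{2t^{2}}{\alpha-1}\langle\dot x(t),\nabla\Phi(x(t))\rangle$ coming from $\tfrac{d}{dt}\bigl[(\alpha-1)\|v\|^{2}\bigr]=2t\langle v(t),g(t)-\nabla\Phi(x(t))\rangle$ cancels the one coming from $\tfrac{d}{dt}\bigl[\tfrac{2t^{2}}{\alpha-1}(\Phi(x(t))-\Phi^*)\bigr]$. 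Applying convexity in the form $\langle x(t)-x^*,\nabla\Phi(x(t))\rangle\ge\Phi(x(t))-\Phi^*$ to the surviving term $-2t\langle x(t)-x^*,\nabla\Phi(x(t))\rangle$ yields
\[
\dot{\mathcal E}(t)\ \le\ \tfrac{2(3-\alpha)}{\alpha-1}\,t\,\bigl(\Phi(x(t))-\Phi^*\bigr)+2t\,\langle v(t),g(t)\rangle .
\]
Here the hypothesis $\alpha\ge 3$ is used decisively: then $3-\alpha\le 0$ while $\Phi(x(t))-\Phi^*\ge 0$, so the first term is nonpositive, and together with $(\alpha-1)\|v(t)\|^{2}\le\mathcal E(t)$ we get $\dot{\mathcal E}(t)\le 2t\|g(t)\|\|v(t)\|\le \tfrac{2t\|g(t)\|}{\sqrt{\alpha-1}}\sqrt{\mathcal E(t)}$.

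Integrating on $[t_0,t]$ gives $\tfrac12\mathcal E(t)\le\tfrac12 C+\int_{t_0}^{t}\tfrac{\tau\|g(\tau)\|}{\sqrt{\alpha-1}}\sqrt{\mathcal E(\tau)}\,d\tau$, which is exactly the hypothesis of the Gronwall--Bellman Lemma \ref{GB-lemma} with $w=\sqrt{\mathcal E}$ (continuous since $\mathcal E\ge 0$), $c=\sqrt C$ and $m(\tau)=\tau\|g(\tau)\|/\sqrt{\alpha-1}$, the assumption $\int_{t_0}^{\infty}\tau\|g(\tau)\|\,d\tau<+\infty$ ensuring $m\in L^{1}$. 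The lemma gives $\sqrt{\mathcal E(t)}\le\sqrt C+\tfrac{1}{\sqrt{\alpha-1}}\int_{t_0}^{\infty}\tau\|g(\tau)\|\,d\tau$ for all $t\ge t_0$; dividing by $\alpha-1$ and keeping only $\|v(t)\|^{2}\le\mathcal E(t)/(\alpha-1)$ yields \eqref{Liap-001-b}. Re-inserting the bound \eqref{Liap-001-b} for $\sup_{\tau}\|v(\tau)\|$ into the integrated inequality $\mathcal E(t)\le C+2\int_{t_0}^{\infty}\tau\|g(\tau)\|\,\|v(\tau)\|\,d\tau$ then yields \eqref{Liap-001}, and retaining only the first summand of $\mathcal E$ gives $\Phi(x(t))-\Phi^*=\mathcal O(1/t^{2})$.

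The only genuinely delicate step is the construction of $\mathcal E$: the velocity weight $t/(\alpha-1)$ and the prefactors $2t^{2}/(\alpha-1)$ and $\alpha-1$ must be chosen so that (i) all $\langle\dot x,\nabla\Phi\rangle$ cross terms cancel and (ii) the residual $\Phi$-term carries the factor $3-\alpha$, which is what pins down the threshold $\alpha=3$. Once this is in place the perturbation enters only linearly through $\|v(t)\|$, and the argument closes via the quadratic Gronwall inequality; the remaining verifications (absolute continuity of $\mathcal E$, continuity of $\sqrt{\mathcal E}$, integrability of $m$) are routine and rest on Proposition \ref{energy-thm-1} and the standing hypotheses.
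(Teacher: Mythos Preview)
Your argument is correct and follows essentially the same Lyapunov route as the paper. The only cosmetic difference is packaging: the paper builds the perturbation integral $2\int_t^{T}\tau\langle v(\tau),g(\tau)\rangle\,d\tau$ into the energy so that $\mathcal E_{\alpha,g,T}$ is genuinely nonincreasing, then drops the $\Phi$--term and applies Gronwall--Bellman to $w=\|v\|$; you instead keep the bare energy $\mathcal E$, carry the source term on the right of $\dot{\mathcal E}$, and apply Gronwall--Bellman to $w=\sqrt{\mathcal E}$. Both variants produce exactly the bounds \eqref{Liap-001-b} and \eqref{Liap-001}.
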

\begin{proof} The proof is an adaptation to our setting 
(with an integrable source term $g$) of the argument developed by Su-Boyd-Cand\`es in \cite{SBC}. 
Let us give some $T >t_0$, and $x^{*} \in S= \mbox{\rm{argmin}} \Phi$. For $t_0 \leq t \leq T$, let us define the energy function
\begin{equation}\label{basic-Liap}
 \mathcal E_{\alpha,g, T} (t):= \frac{2}{\alpha-1}t^2 (\Phi(x(t))- \inf_{\mathcal H}\Phi )+ (\alpha-1) \| x(t) - x^{*} + \frac{t}{\alpha-1}  \dot{x}(t)  \|^2 + 
 2\int_t^{T} \tau \langle x(\tau) - x^{*} + \frac{\tau}{\alpha-1}  \dot{x}(\tau),  g(\tau)   \rangle d\tau.
\end{equation}
Let us show that
$$ \dot{\mathcal E}_{\alpha,g,T} (t) +  2 \displaystyle{\frac{\alpha-3}{\alpha-1}} t (\Phi(x(t))- \min_{\mathcal H}\Phi ) \leq 0 .
$$
Derivation of $\mathcal E_{\alpha, g,T} (\cdot)$ gives
\begin{align*}
\dot{\mathcal E}_{\alpha,g,T} (t)&:= \frac{4}{\alpha-1}t (\Phi(x(t))- \inf_{\mathcal H}\Phi ) +
\frac{2}{\alpha-1}t^2  \langle  \nabla \Phi (x(t)),  \dot{x}(t)       \rangle \\
&+ 2(\alpha-1) \langle  x(t) - x^{*} + \frac{t}{\alpha-1} \dot{x}(t)  ,  \dot{x}(t)       +
\frac{1}{\alpha-1} \dot{x}(t)   +\frac{t}{\alpha-1}  \ddot{x}(t)
 \rangle - 2t\langle  x(t) - x^{*} + \frac{t}{\alpha-1} \dot{x}(t)  ,  g(t) \rangle \\
&=  \frac{4}{\alpha-1}t (\Phi(x(t))- \inf_{\mathcal H}\Phi ) + 
\frac{2}{\alpha-1}t^2  \langle  \nabla \Phi (x(t)),  \dot{x}(t)       \rangle \\
&+  2(\alpha-1) \langle  x(t) - x^{*} + \frac{t}{\alpha-1}  \dot{x}(t) , 
\frac{t}{\alpha-1} \Big( \frac{\alpha}{t}\dot{x}(t) + \ddot{x}(t) -g(t)\Big) \rangle. 
\end{align*}
Then use $\mbox{{\rm(AVD)}}_{\alpha, g}$ in this last expression to obtain
\begin{align}
\dot{\mathcal E}_{\alpha,g,T} (t)=& \frac{4}{\alpha-1}t (\Phi(x(t))- \inf_{\mathcal H}\Phi ) + \frac{2}{\alpha-1}t^2  \langle  \nabla \Phi (x(t)),  \dot{x}(t)  \rangle \\
&-  2 t\langle  x(t) - x^{*} +  \frac{t}{\alpha-1}  \dot{x}(t), \nabla \Phi (x(t))   \rangle\\
=& \frac{4}{\alpha-1}t (\Phi(x(t))- \inf_{\mathcal H}\Phi ) -  2 t \langle  x(t) - x^{*} , \nabla \Phi (x(t))   \rangle.\label{rfastoche1}
\end{align}
By convexity of $\Phi$ 
$$
\Phi (x^{*}) \geq \Phi (x(t))+ \langle  x^{*} - x(t)  , \nabla \Phi (x(t)) \rangle. 
$$
Replacing in (\ref{rfastoche1}) we obtain 
\begin{equation*}
\dot{\mathcal E}_{\alpha,g,T} (t) + \left(2- \frac{4}{\alpha-1}\right) t (\Phi(x(t))- \inf_{\mathcal H}\Phi)     \leq 0.
\end{equation*}
Equivalently
\begin{equation}\label{basic-energy-Liap}
 \dot{\mathcal E}_{\alpha,g,T} (t) +  2 \frac{\alpha-3}{\alpha-1} t (\Phi(x(t))- \inf_{\mathcal H}\Phi ) \leq 0 .
\end{equation}
As a consequence, for $\alpha \geq 3$, the function ${\mathcal E}_{\alpha,g}$ is nonincreasing. In particular,
${\mathcal E}_{\alpha,g} (t) \leq {\mathcal E}_{\alpha,g} (t_0)$, which gives
\begin{align*}
&\frac{2}{\alpha-1}t^2 (\Phi(x(t))- \inf_{\mathcal H}\Phi )+ (\alpha-1) \| x(t) - x^{*} + \frac{t}{\alpha-1}  \dot{x}(t)  \|^2 + 
 2\int_t^{T} \tau \langle x(\tau) - x^{*} + \frac{\tau}{\alpha-1}  \dot{x}(\tau),  g(\tau)   \rangle d\tau \\
&\leq  \frac{2}{\alpha-1}{t_0}^2 (\Phi(x_0)- \inf_{\mathcal H}\Phi )+ (\alpha-1) \| x_0 - x^{*} + \frac{t_0}{\alpha-1}  \dot{x}(t_0)  \|^2 + 
 2\int_{t_{0}}^{T} \tau \langle x(\tau) - x^{*} + \frac{\tau}{\alpha-1}  \dot{x}(\tau),  g(\tau)   \rangle d\tau .
\end{align*}
Equivalently
\begin{align}\label{energy-08}
\frac{2}{\alpha-1}t^2 (\Phi(x(t))- \inf_{\mathcal H}\Phi )+ (\alpha-1) \| x(t) - x^{*} + \frac{t}{\alpha-1}  \dot{x}(t)  \|^2  \leq C+
 2\int_{t_{0}}^{t} \tau \langle x(\tau) - x^{*} + \frac{\tau}{\alpha-1}  \dot{x}(\tau),  g(\tau)   \rangle d\tau ,
\end{align}
with
$$
C= \frac{2}{\alpha-1}{t_0}^2 (\Phi(x_0)- \inf_{\mathcal H}\Phi )+ (\alpha-1) \| x_0 - x^{*} + \frac{t_0}{\alpha-1}  \dot{x}(t_0)  \|^2 .
$$
From (\ref{energy-08}) we infer
\begin{align}\label{energy-09}
 \frac{1}{2} \| x(t) - x^{*} + \frac{t}{\alpha-1}  \dot{x}(t)  \|^2  \leq \frac{C}{2(\alpha-1)}+ \frac{1}{\alpha-1}
 \int_{t_{0}}^{t}  \| x(\tau) - x^{*} + \frac{\tau}{\alpha-1}  \dot{x}(\tau)\|  \|\tau g(\tau)   \| d\tau .
\end{align}
Applying once more  Gronwall-Bellman lemma \ref{GB-lemma}, we obtain
\begin{align}\label{energy-1}
 \| x(t) - x^{*} + \frac{t}{\alpha-1}  \dot{x}(t)  \| \leq \left( \frac{C}{\alpha-1}\right)^{ \frac{1}{2} } + \frac{1}{\alpha-1}
 \int_{t_{0}}^{t}  \tau \| g(\tau)   \| d\tau .
\end{align}
Since $\displaystyle{\int_{t_0}^{+\infty} t \|g(t)\| dt < + \infty}$, it follows that
\begin{align}\label{energy-10}
\sup_t \| x(t) - x^{*} + \frac{t}{\alpha-1}  \dot{x}(t)  \| \leq \left( \frac{C}{\alpha-1}\right)^{ \frac{1}{2} } + \frac{1}{\alpha-1}
 \int_{t_{0}}^{\infty}  \tau \| g(\tau)   \| d\tau <+\infty .
\end{align}
Returning to (\ref{energy-08}), we conclude  that 
\begin{align}\label{energy-11}
 \frac{2}{\alpha-1}t^2 (\Phi(x(t))- \inf_{\mathcal H}\Phi ) \leq C+ 
 2 \left(\left( \frac{C}{\alpha-1}\right)^{ \frac{1}{2} } + \frac{1}{\alpha-1}
 \int_{t_{0}}^{\infty}  \tau \| g(\tau)   \| d\tau \right) \int_{t_{0}}^{\infty}  \tau \|g(\tau) \|   d\tau .
\end{align}
\end{proof}

\begin{remark} As a consequence 
the energy function
\begin{equation}\label{basic-Liap-c}
 \mathcal E_{\alpha,g} (t):= \frac{2}{\alpha-1}t^2 (\Phi(x(t))- \inf_{\mathcal H}\Phi )+ (\alpha-1) \| x(t) - x^{*} + \frac{t}{\alpha-1}  \dot{x}(t)  \|^2 + 
 2\int_t^{+\infty} \tau \langle x(\tau) - x^{*} + \frac{\tau}{\alpha-1}  \dot{x}(\tau),  g(\tau)   \rangle d\tau.
\end{equation}
is well defined, and is a Lyapunov function for the dynamical system $ \mbox{{\rm(AVD)}}_{\alpha,g}$.
\end{remark}

\section{Convergence of trajectories}

 In the case $\alpha >3$, provided that the second member $g(t)$ is sufficiently small for large $t$, we are going to show the convergence of the trajectories of the  system

\begin{equation*}
 \mbox{(AVD)}_{\alpha,g} \quad \quad \ddot{x}(t) + \frac{\alpha}{t} \dot{x}(t) + \nabla \Phi (x(t)) = g(t).
\end{equation*}

\subsection{Main statement, and  preliminary results}

The following convergence result is an extension to the perturbed case (with a source term $g$) of the convergence result obtained by Attouch-Peypouquet-Redont in \cite{APR1}.

\begin{Theorem} \label{Thm-weak-conv}
Let $\Phi : \mathcal H \rightarrow \mathbb R$ a convex continuously differentiable function 
such that   $S=\argmin \Phi$ is nonempty.
Suppose that $\alpha >3$ and $\displaystyle{\int_{t_0}^{+\infty} t \|g(t)\| dt < + \infty}$. 
Let $t_0 >0$, and  $x: [t_0, +\infty[ \rightarrow \mathcal H$ be a  classical solution  of {\rm$  \mbox{(AVD)}_{\alpha,g}$}. 
Then, the following  convergence properties hold: 

\medskip

a) (weak convergence) \ 
There exists some  $x^{*}  \in \argmin \Phi$ such that
\begin{equation}\label{conv-basic}
  x(t) \rightharpoonup x^{*} \ \mbox{weakly as} \ t \rightarrow + \infty.
\end{equation}

b) (fast convergence) \ There exists a positive constant $C$ such that 
\begin{equation}\label{rfast1}
   \Phi(x(t))-  \min_{\mathcal H}\Phi \leq \frac{C}{t^2}
\end{equation}
\begin{equation}\label{energy2}
  \int_{t_0}^{\infty}  t \left( \Phi(x(t)) - \inf_{\mathcal H}\Phi \right) dt  < + \infty.
\end{equation}
c) (energy estimate)
\begin{align}\label{energy1}
  &\int_{t_0}^{\infty}  t\| \dot{x}(t) \|^2  dt   < + \infty\\
  &\| \dot{x}(t)  \| \leq \frac{C}{t} \label{conv1}
\end{align}
and hence
\begin{equation}\label{conv2}
   \lim_{t\to\infty}  \| \dot{x}(t)  \| = 0.
\end{equation}
\end{Theorem}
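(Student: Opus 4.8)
The plan is to combine the energy/Lyapunov estimates already established with an Opial-type lemma for weak convergence. The main new ingredient compared to the unperturbed case of \cite{APR1} is to carry the integrable source term $g$ through each step, using the hypothesis $\int_{t_0}^{+\infty} t\|g(t)\|\,dt<+\infty$ rather than merely $\int_{t_0}^{+\infty}\|g(t)\|\,dt<+\infty$.

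First, I would record the easy consequences of Theorem \ref{fastconv-thm} and Proposition \ref{energy-thm-1}: since $\alpha\geq 3$ the bound $\Phi(x(t))-\min_{\mathcal H}\Phi\leq C/t^2$ is immediate, which is (\ref{rfast1}); and (\ref{et1})--(\ref{et2}) give $\sup_t\|\dot x(t)\|<+\infty$ together with $\int_{t_0}^\infty t^{-1}\|\dot x(t)\|^2\,dt<+\infty$. To upgrade these to the stronger integrability statements (\ref{energy2}) and (\ref{energy1}), I would revisit the Lyapunov inequality (\ref{basic-energy-Liap}): for $\alpha>3$ the coefficient $2\frac{\alpha-3}{\alpha-1}$ is strictly positive, so integrating (\ref{basic-energy-Liap}) from $t_0$ to $+\infty$ and using that $\mathcal E_{\alpha,g}(t)$ is bounded below (which follows from (\ref{energy-10}) controlling the $\|x(t)-x^*+\frac{t}{\alpha-1}\dot x(t)\|$ term and hence the tail integral $\int_t^\infty \tau\langle\cdots,g(\tau)\rangle d\tau$) yields $\int_{t_0}^\infty t(\Phi(x(t))-\inf\Phi)\,dt<+\infty$, which is (\ref{energy2}). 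For (\ref{energy1}), I would introduce the auxiliary function $h(t)=\tfrac12\|x(t)-x^*\|^2$ and compute $\ddot h+\frac{\alpha}{t}\dot h = \|\dot x(t)\|^2 + \langle x(t)-x^*,\ddot x(t)+\frac{\alpha}{t}\dot x(t)\rangle = \|\dot x(t)\|^2 - \langle x(t)-x^*,\nabla\Phi(x(t))\rangle + \langle x(t)-x^*,g(t)\rangle$. Multiplying by $t$, using convexity $\langle x(t)-x^*,\nabla\Phi(x(t))\rangle\geq \Phi(x(t))-\inf\Phi\geq 0$, and integrating, the boundedness of $x(t)-x^*$ (from (\ref{energy-10}) and (\ref{et1})) together with $\int t\|g(t)\|dt<\infty$ and (\ref{energy2}) controls all terms except $\int t\|\dot x(t)\|^2dt$, which is thereby shown finite. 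Then (\ref{conv1}) follows from $\|\dot x(t)\|\leq \frac{1}{t}\|x(t)-x^*+\frac{t}{\alpha-1}\dot x(t)\|\cdot(\alpha-1) + \frac{\alpha-1}{t}\|x(t)-x^*\|$... more carefully, rearranging $\frac{t}{\alpha-1}\dot x(t) = \big(x(t)-x^*+\frac{t}{\alpha-1}\dot x(t)\big) - \big(x(t)-x^*\big)$ and using boundedness of both bracketed quantities gives $\|\dot x(t)\|\leq C/t$, whence (\ref{conv2}).

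For part a), the plan is to verify the two conditions of Opial's lemma: (i) for every $z\in S$, $\lim_{t\to\infty}\|x(t)-z\|$ exists; (ii) every weak sequential cluster point of $x(t)$ lies in $S$. For (ii), if $x(t_n)\rightharpoonup \bar x$ then weak lower semicontinuity of $\Phi$ together with $\Phi(x(t_n))\to\inf\Phi$ (from (\ref{rfast1})) forces $\Phi(\bar x)=\inf\Phi$, so $\bar x\in S$. For (i) — the delicate point — I would fix $z\in S$, set $h_z(t)=\tfrac12\|x(t)-z\|^2$, and from the computation above get $\ddot h_z(t)+\frac{\alpha}{t}\dot h_z(t)\leq \|\dot x(t)\|^2 + \langle x(t)-z,g(t)\rangle$, since the $\nabla\Phi$ term has a favorable sign. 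Multiplying by $t^{\alpha}$ gives $\frac{d}{dt}\big(t^\alpha\dot h_z(t)\big)\leq t^\alpha\big(\|\dot x(t)\|^2+\|x(t)-z\|\,\|g(t)\|\big)$; the standard trick is then to divide by $t^\alpha$ after integration, but since $\int t\|\dot x(t)\|^2dt<\infty$ and $\int t\|g(t)\|dt<\infty$ (with $\|x(t)-z\|$ bounded) one deduces that $[\dot h_z]^+\in L^1$ after the appropriate manipulation, hence $\lim_{t\to\infty}h_z(t)$ exists. I expect this last step to be the main obstacle: one must be careful with the weights $t^\alpha$ versus the weight $t$ appearing in the available integral bounds, and the argument typically proceeds by showing $\int_{t_0}^\infty \frac{1}{t^\alpha}\int_{t_0}^t s^\alpha\big(\|\dot x(s)\|^2+\|x(s)-z\|\|g(s)\|\big)\,ds\,dt<\infty$ via Fubini, which requires $\alpha>1$ (hence is fine for $\alpha>3$) together with the $L^1$ bounds with weight $t$. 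Concluding, Opial's lemma then delivers the weak limit $x^*\in S$ of (\ref{conv-basic}).
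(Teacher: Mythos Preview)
Your outline is largely on track for (\ref{rfast1}), (\ref{energy2}), and the Opial argument, but there is a genuine gap in the middle block where you derive (\ref{energy1}), (\ref{conv1}), and the boundedness of the orbit. Two related problems:

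First, the boundedness of $x(t)-x^{*}$ does \emph{not} follow from (\ref{energy-10}) and (\ref{et1}) as you claim. Estimate (\ref{energy-10}) bounds $\|x(t)-x^{*}+\frac{t}{\alpha-1}\dot x(t)\|$, and (\ref{et1}) only gives $\sup_t\|\dot x(t)\|<\infty$; hence $\frac{t}{\alpha-1}\|\dot x(t)\|$ may grow, and you cannot yet separate the two pieces. Your derivation of (\ref{conv1}) from the decomposition $\frac{t}{\alpha-1}\dot x(t)=\big(x(t)-x^{*}+\frac{t}{\alpha-1}\dot x(t)\big)-\big(x(t)-x^{*}\big)$ is therefore circular: it presupposes the boundedness of $x(t)$ that you have not yet established.

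Second, the route you propose for (\ref{energy1}) --- multiplying $\ddot h+\frac{\alpha}{t}\dot h=\|\dot x\|^{2}-\langle x-x^{*},\nabla\Phi(x)\rangle+\langle x-x^{*},g\rangle$ by $t$ and integrating --- does not isolate $\int t\|\dot x\|^{2}$ with a usable sign. If you solve for $t\|\dot x\|^{2}$ and integrate, the term $\int t\langle x-x^{*},\nabla\Phi(x)\rangle$ appears on the right; convexity only gives the lower bound $\langle x-x^{*},\nabla\Phi(x)\rangle\geq\Phi(x)-\inf\Phi$, so (\ref{energy2}) does not control it from above. And the boundary term $t\dot h(t)=t\langle x(t)-x^{*},\dot x(t)\rangle$ is again not yet known to be bounded.

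The paper avoids both issues by taking a \emph{rescaled energy} estimate instead of an anchor estimate at this stage: it multiplies $\mbox{(AVD)}_{\alpha,g}$ by $t^{2}\dot x(t)$, integrates by parts to obtain
\[
\tfrac{1}{2}t^{2}\|\dot x(t)\|^{2}+(\alpha-1)\int_{t_0}^{t}s\|\dot x(s)\|^{2}\,ds\;\leq\;C+2\int_{t_0}^{t}s\big(\Phi(x(s))-\inf\Phi\big)\,ds+\int_{t_0}^{t}\|sg(s)\|\,\|s\dot x(s)\|\,ds,
\]
uses (\ref{energy2}) to absorb the $\Phi$--integral, and then applies Gronwall--Bellman to the quantity $t\|\dot x(t)\|$ to obtain $\sup_t t\|\dot x(t)\|<\infty$, i.e.\ (\ref{conv1}), \emph{before} any boundedness of $x$ is used. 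With (\ref{conv1}) in hand, boundedness of $x(t)$ now follows legitimately from (\ref{energy-10}), and feeding $\sup_t t\|\dot x(t)\|<\infty$ back into the displayed inequality gives (\ref{energy1}). Your Step~3 (the Opial argument via $\ddot h+\frac{\alpha}{t}\dot h\leq k(t)$ with $tk\in L^{1}$ and Lemma~\ref{basic-edo}) then goes through exactly as you describe, and matches the paper.
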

 In order to analyze the  convergence properties of the trajectories
of system (\ref{edo01}), we will use the  Opial's lemma
\cite{Op} that we recall in its continuous form; see also \cite{Bruck}, who initiated the use of this argument to analyze the asymptotic convergence of nonlinear contraction semigroups in Hilbert spaces.
\begin{lemma}\label{Opial} Let $S$ be a non empty subset of $\mathcal H$
and $x:[0,+\infty[\to \mathcal H$ a map. Assume that 
\begin{eqnarray*}
(i) &  & \mbox{for every }z\in S,\>\lim_{t\to+\infty}\|x(t)-z\|\mbox{ exists};\\
(ii) &  & \mbox{every weak sequential cluster point of the map }x\mbox{ belongs to }S.
\end{eqnarray*}
 Then 
\[
w-\lim_{t\to+\infty}x(t)=x_{\infty}\ \ \mbox{ exists, for some element }x_{\infty}\in S.
\]
 \end{lemma}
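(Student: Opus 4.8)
The plan is to prove this purely from the two hypotheses, using only the weak sequential compactness of bounded sets in the Hilbert space $\mathcal H$; no continuity of the map $x$ is needed. First I would establish boundedness of the orbit: since $S$ is nonempty, fix any $z_0\in S$; hypothesis (i) guarantees that $\lim_{t\to+\infty}\|x(t)-z_0\|$ exists, so $t\mapsto\|x(t)-z_0\|$ is bounded, and therefore $\{x(t):t\geq 0\}$ is bounded in $\mathcal H$. By the weak sequential compactness of bounded subsets of a Hilbert space, the orbit possesses at least one weak sequential cluster point as $t\to+\infty$, and by hypothesis (ii) every such cluster point lies in $S$. This already produces a candidate limit $x_{\infty}\in S$.

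The crux of the argument is to show that the weak cluster point is unique, which upgrades mere existence of a cluster point to genuine weak convergence. Suppose $x_1$ and $x_2$ are two weak sequential cluster points, say $x(t_n)\rightharpoonup x_1$ and $x(s_n)\rightharpoonup x_2$ with $t_n,s_n\to+\infty$; by (ii) both belong to $S$. The key observation is the algebraic identity
$$
\|x(t)-x_1\|^2-\|x(t)-x_2\|^2 = -2\langle x(t),\,x_1-x_2\rangle + \|x_1\|^2-\|x_2\|^2 .
$$
Since $x_1,x_2\in S$, hypothesis (i) ensures that both $\|x(t)-x_1\|$ and $\|x(t)-x_2\|$ converge as $t\to+\infty$; hence the left-hand side converges, and so the scalar function $t\mapsto\langle x(t),\,x_1-x_2\rangle$ admits a limit $\ell\in\mathbb R$. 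Evaluating $\ell$ along $(t_n)$ and using $x(t_n)\rightharpoonup x_1$ gives $\ell=\langle x_1,\,x_1-x_2\rangle$, while evaluating it along $(s_n)$ and using $x(s_n)\rightharpoonup x_2$ gives $\ell=\langle x_2,\,x_1-x_2\rangle$. Subtracting these two expressions for the same limit yields $\|x_1-x_2\|^2=0$, so $x_1=x_2$.

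Finally I would deduce weak convergence from boundedness together with uniqueness of the cluster point. If $x(t)$ did not converge weakly to the unique cluster point $x_{\infty}$, there would exist $\varphi\in\mathcal H$, $\varepsilon>0$, and a sequence $\tau_n\to+\infty$ with $|\langle x(\tau_n)-x_{\infty},\varphi\rangle|\geq\varepsilon$; by boundedness one could extract a weakly convergent subsequence $x(\tau_{n_k})\rightharpoonup y$, and $y$ would be a weak cluster point, hence $y=x_{\infty}$ by the uniqueness just established, contradicting $|\langle y-x_{\infty},\varphi\rangle|\geq\varepsilon$. Therefore $x(t)\rightharpoonup x_{\infty}$, and $x_{\infty}\in S$ by (ii), which is the asserted conclusion. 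The only genuinely delicate point is the uniqueness step: everything else is a routine consequence of weak compactness, and the displayed identity is precisely what converts the two distance limits supplied by (i) into the orthogonality relation $\langle x_1-x_2,\,x_1-x_2\rangle=0$ that forces $x_1=x_2$.
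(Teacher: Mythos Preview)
Your proof is correct and is in fact the standard argument for Opial's lemma. Note, however, that the paper does not give its own proof of this statement: it merely recalls the lemma and attributes it to Opial \cite{Op} (see also Bruck \cite{Bruck}). So there is nothing to compare against; you have supplied a clean, self-contained proof where the paper only cites the literature.
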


We also need the following result concerning the integration of a first-order nonautonomous differential inequation, see \cite{APR1}.

 \begin{lemma}\label{basic-edo} Suppose that $\delta >0$, and let $w: [\delta, +\infty[ \rightarrow \mathbb R$ be a continuously differentiable function that satisfies the following differential inequality
 \begin{equation}\label{basic-edo1}
 \dot{w}(t) + \frac{\alpha}{t} w(t)  \leq k(t),
\end{equation}
 for some $\alpha > 1$, and some nonnegative function $k: [\delta, +\infty[ \to \mathbb R$ such that $t \mapsto tk(t) \in L^1 (\delta, +\infty)$. Then 
 \begin{equation}\label{basic-edo2}
  w^{+} \in L^1 (\delta, +\infty).
\end{equation}
 \end{lemma}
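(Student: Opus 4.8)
The plan is to solve the differential inequality \eqref{basic-edo1} by means of the integrating factor $t^{\alpha}$. Since $w$ is $C^1$, the function $t\mapsto t^{\alpha}w(t)$ is $C^1$ as well, and multiplying \eqref{basic-edo1} by $t^{\alpha}>0$ gives
\[
\frac{d}{dt}\bigl(t^{\alpha}w(t)\bigr) = t^{\alpha}\dot w(t) + \alpha t^{\alpha-1}w(t) \leq t^{\alpha}k(t).
\]
Integrating from $\delta$ to $t$ and dividing by $t^{\alpha}$, and then using that $k\geq 0$ so that $\int_\delta^t s^{\alpha}k(s)\,ds\geq 0$, I obtain the pointwise bound
\[
w^{+}(t) \leq \delta^{\alpha}|w(\delta)|\,t^{-\alpha} + t^{-\alpha}\int_{\delta}^{t} s^{\alpha}k(s)\,ds ,
\]
the right-hand side being nonnegative, hence an upper bound for $\max(w(t),0)$.

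Next I would integrate this inequality over $[\delta,+\infty[$. The first term contributes $\delta^{\alpha}|w(\delta)|\int_{\delta}^{\infty}t^{-\alpha}\,dt = \frac{\delta |w(\delta)|}{\alpha-1}<+\infty$, which is finite precisely because $\alpha>1$. For the second term, all integrands are nonnegative and measurable, so Tonelli's theorem permits exchanging the order of integration:
\[
\int_{\delta}^{\infty} t^{-\alpha}\Bigl(\int_{\delta}^{t} s^{\alpha}k(s)\,ds\Bigr)dt = \int_{\delta}^{\infty} s^{\alpha}k(s)\Bigl(\int_{s}^{\infty} t^{-\alpha}\,dt\Bigr)ds = \frac{1}{\alpha-1}\int_{\delta}^{\infty} s\,k(s)\,ds ,
\]
which is finite by the hypothesis $t\mapsto tk(t)\in L^1(\delta,+\infty)$. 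Adding the two contributions yields $\int_{\delta}^{\infty}w^{+}(t)\,dt<+\infty$, i.e.\ \eqref{basic-edo2}.

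I do not expect a genuine obstacle here: the only points needing care are the justification of the Fubini--Tonelli exchange (immediate, since everything is nonnegative) and the observation that the weight $s^{\alpha}$ produced by the integrating factor is turned back into $s$ exactly by the tail integral $\int_{s}^{\infty}t^{-\alpha}\,dt = \frac{s^{1-\alpha}}{\alpha-1}$, which converges because $\alpha>1$. In other words, the whole content of the lemma is the remark that $t^{\alpha}$ is the correct integrating factor and that $\alpha>1$ makes the relevant power of $t$ integrable at infinity. This is the argument of \cite{APR1}, adapted verbatim.
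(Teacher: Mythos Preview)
Your argument is correct and is exactly the standard integrating-factor computation that the paper invokes (the paper does not supply its own proof of this lemma but refers to \cite{APR1}, and later, in the even-function strong-convergence proof, carries out precisely the same steps you describe: multiply by $t^{\alpha}$, integrate, then apply Fubini to see that the resulting kernel lies in $L^1$). There is nothing to add.
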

\subsection{Proof of the convergence results}

\begin{proof}

\textbf{Step 1.} 
Let us return to the decrease property (\ref{basic-energy-Liap}) which is satisfied by the Lyapunov function 
${\mathcal E}_{\alpha,g}$:
\begin{equation*}
\dot{\mathcal E}_{\alpha,g} (t) + 2 \frac{\alpha-3}{\alpha-1}  t (\Phi(x(t))- \inf_{\mathcal H}\Phi)  \leq 0.
\end{equation*}
By integration of this inequality, we obtain
\begin{equation*}
{\mathcal E}_{\alpha,g} (t)  +
2 \frac{\alpha-3}{\alpha-1} \int_{t_0}^t \tau(\Phi(x(\tau))- \inf_{\mathcal H}\Phi) d\tau \leq {\mathcal E}_{\alpha,g} (t_0).
\end{equation*}
By definition of ${\mathcal E}_{\alpha,g}$, and neglecting its nonnegative terms, we infer
\begin{equation*} 
 2\int_t^{+\infty} \tau \langle x(\tau) - x^{*} + \frac{\tau}{\alpha-1}  \dot{x}(\tau),  g(\tau)   \rangle d\tau  +
2 \frac{\alpha-3}{\alpha-1} \int_{t_0}^t \tau(\Phi(x(\tau))- \inf_{\mathcal H}\Phi) d\tau \leq {\mathcal E}_{\alpha,g} (t_0).
\end{equation*}
Hence
\begin{equation*} 
2 \frac{\alpha-3}{\alpha-1} \int_{t_0}^t \tau(\Phi(x(\tau))- \inf_{\mathcal H}\Phi) d\tau \leq {\mathcal E}_{\alpha,g} (t_0) + 2\int_{t_0}^{+\infty} \| x(\tau) - x^{*} + \frac{\tau}{\alpha-1}  \dot{x}(\tau)\| \|\tau g(\tau)  \| \rangle d\tau  .
\end{equation*}
By (\ref{energy-10}), we have
\begin{align*}
\sup_t \| x(t) - x^{*} + \frac{t}{\alpha-1}  \dot{x}(t)  \|  <+\infty .
\end{align*}
As a consequence
\begin{equation*} 
2 \frac{\alpha-3}{\alpha-1} \int_{t_0}^t \tau(\Phi(x(\tau))- \inf_{\mathcal H}\Phi) d\tau \leq {\mathcal E}_{\alpha,g} (t_0) + 2 \sup_t \| x(t) - x^{*} + \frac{t}{\alpha-1}  \dot{x}(t)  \|\int_{t_0}^{+\infty}  \|\tau g(\tau)  \| \rangle d\tau  .
\end{equation*}
Since $\alpha >3$, we deduce that
\begin{equation} \label{basic-estim-2}
 \int_{t_0}^{+\infty} \tau(\Phi(x(\tau))- \inf_{\mathcal H}\Phi) d\tau < + \infty  .
\end{equation}

\medskip

\textbf{Step 2.}
Let us show that
\begin{equation*}
  \int_{t_0}^{\infty} t \|\dot{x}(t)  \|^2 dt < +\infty.
\end{equation*}
To that end, we use the energy estimate which is obtained by taking the scalar product of (\ref{edo01}) by $t^2 \dot{x}(t)$:
\begin{equation}\label{scale-energy1}
t^2\langle  \ddot{x}(t), \dot{x}(t) \rangle + 
\alpha t \|\dot{x}(t)\|^2  + t^2 \langle  \nabla \Phi (x(t)), \dot{x}(t) \rangle  = t^2 \langle  g(t), \dot{x}(t) \rangle .
\end{equation}
By the classical derivation chain rule, and Cauchy-Schwarz inequality, we obtain
\begin{equation}\label{scale-energy2}
\frac{1}{2}t^2  \frac{d}{dt}\|\dot{x}(t)\|^2 + 
\alpha t \|\dot{x}(t)\|^2 +  t^2 \frac{d}{dt}\Phi (x(t) \leq   \| t g(t)\| \|t \dot{x}(t)\| .
\end{equation}
After integration by parts
\begin{align*}
&\frac{t^2}{2}\|\dot{x}(t)\|^2 - \frac{{t_0}^2}{2}\|\dot{x}(t_0)\|^2- \int_{t_0}^t  s \|\dot{x}(s)\|^2 ds +  \alpha \int_{t_0}^t s \|\dot{x}(s)\|^2 ds \\
&+ t^2 (\Phi(x(t))- \inf_{\mathcal H}\Phi )
- {t_0}^2 (\Phi(x(t_0))- \inf_{\mathcal H}\Phi )
- 2\int_{t_0}^t s (\Phi(x(s))- \inf_{\mathcal H}\Phi )ds \leq \int_{t_0}^t \| s g(s)\| \|s \dot{x}(s)\| ds.
\end{align*}
As a consequence, for some constant $C\geq 0$, depending only on the Cauchy data,
\begin{equation}\label{scale-energy4}
\frac{t^2}{2}\|\dot{x}(t)\|^2 + (\alpha -1) \int_{t_0}^t  s \|\dot{x}(s)\|^2 ds \leq C+
2\int_{t_0}^t s (\Phi(x(s))- \inf_{\mathcal H}\Phi )ds  + \int_{t_0}^t \| s g(s)\| \|s \dot{x}(s)\| ds.
\end{equation}
By (\ref{basic-estim-2}) we have $\int_{t_0}^{\infty} s (\Phi(x(s))- \inf_{\mathcal H}\Phi )ds <+ \infty$. Moreover $\alpha >1$. As a consequence, from (\ref{scale-energy4}) we deduce that, for some other constant $C$ 
\begin{equation}\label{scale-energy5}
 \frac{1}{2}\|t\dot{x}(t)\|^2  \leq C+
 \int_{t_0}^t \| s g(s)\| \|s \dot{x}(s)\| ds.
\end{equation}
Applying Gronwall-Bellman lemma \ref{GB-lemma}, we obtain
\begin{equation*}
 \|t\dot{x}(t)\|  \leq \sqrt{2C}+
 \int_{t_0}^t \| s g(s)\| ds.
\end{equation*}
Since $\displaystyle{\int_{t_0}^{+\infty} t \|g(t)\| dt < + \infty}$, we infer
\begin{equation}\label{scale-energy7}
\sup_t \|t\dot{x}(t)\|  < + \infty.
\end{equation}
Returning to (\ref{scale-energy4}), we deduce that
\begin{equation}\label{scale-energy8}
 (\alpha -1) \int_{t_0}^t  s \|\dot{x}(s)\|^2 ds \leq C+
2\int_{t_0}^{\infty} s (\Phi(x(s))- \inf_{\mathcal H}\Phi )ds  + \sup_t \|t \dot{x}(t)\|  \int_{t_0}^{\infty} \| s g(s)\| ds,
\end{equation}
which gives
$$
\int_{t_0}^{\infty}  t\| \dot{x}(t) \|^2  dt   < + \infty .
$$
Moreover, combining 
 (\ref{energy-10}),
\begin{align*}
\sup_t \| x(t) - x^{*} + \frac{t}{\alpha-1}  \dot{x}(t)  \|  <+\infty ,
\end{align*}
with (\ref{scale-energy7}), we deduce that
\begin{equation}\label{scale-energy9}
\sup_t \|x(t)\|  < + \infty,
\end{equation}
i.e., all the orbits are bounded.

\medskip

\textbf{Step 3.} 
Our proof of the weak convergence property of the orbits of {\rm$  \mbox{(AVD)}_{\alpha,g}$}
relies on Opial's lemma.
Given $x^{*}  \in \argmin \Phi$, let us define $h: [0, +\infty[ \rightarrow \mathbb R^+$ by
\begin{equation}\label{wconv-01}
 h(t) = \frac{1}{2}\| x(t) - x^{*}\|^2 .
\end{equation}
By the classical derivation chain rule 
\begin{align}\label{wconv20}
 &  \dot{h}(t) =  \langle x(t) - x^{*} , \dot{x}(t)  \rangle,\\
 &   \ddot{h}(t) = \langle x(t) - x^{*} , \ddot{x}(t)  \rangle + \| \dot{x}(t) \|^2 .
\end{align}
Combining these two equations, and  using (\ref{edo01}) we obtain
\begin{align}\label{wconv30}
 \ddot{h}(t) + \frac{\alpha}{t} \dot{h}(t) &=  \| \dot{x}(t) \|^2 + \langle x(t) - x^{*} , \ddot{x}(t) + \frac{\alpha}{t} \dot{x}(t)  \rangle,\\
 & =  \| \dot{x}(t) \|^2 +  \langle x(t) - x^{*} , -\nabla \Phi (x(t)) + g(t)\rangle .
\end{align}
By monotonicity of $\nabla \Phi$ and $\nabla \Phi(x^{*}) = 0 $
\begin{equation}\label{wconv40}
 \langle x(t) - x^{*} , -\nabla \Phi (x(t))  \rangle \leq 0.
\end{equation}
By (\ref{wconv30}) and  (\ref{wconv40}) we infer
\begin{equation}\label{wconv50}
 \ddot{h}(t) + \frac{\alpha}{t} \dot{h}(t) \leq \| \dot{x}(t) \|^2  +       \| x(t) - x^{*} \|  \| g(t) \|.
\end{equation}
Equivalently
\begin{equation}\label{wconv60}
 \ddot{h}(t) + \frac{\alpha}{t} \dot{h}(t) \leq k(t),
\end{equation}
with
$$
k(t):= \| \dot{x}(t) \|^2  +       \| x(t) - x^{*} \|  \| g(t) \|.
$$
By (\ref{scale-energy9}) the orbit is bounded. Hence, for some constant $C\geq 0$
$$
k(t)\leq \| \dot{x}(t) \|^2  + C \| g(t) \|.
$$
By assumption $\int_{t_0}^{+\infty} t \|g(t)\| dt < + \infty$, and by (\ref{energy1})
$\int_{t_0}^{\infty}  t\| \dot{x}(t) \|^2  dt   < + \infty$. Hence $t \mapsto tk(t) \in L^1 (t_0, +\infty)$.
Applying  Lemma \ref{basic-edo}, with $w(t)= \dot{h}(t)$, we deduce that $w^{+} \in L^1 (t_0, +\infty)$.
Equivalently $\dot{h}^+(t) \in L^1 (t_0, +\infty)$, which implies that  
 the limit of $h(t)$ exists, as $t \to + \infty$. This proves item $i)$ of the Opial's lemma.
We complete the proof by observing that  item $ii)$  is  satisfied too. Indeed, since $\Phi (x(t))$ converges to  $\inf \Phi$, we have that every weak sequential cluster point of $x(\cdot)$ is a minimizer of $\Phi$.
\end{proof}
\subsection{Strong convergence results}
Since the work of J.B. Baillon, we know that without additional assumptions, 
the trajectories of the gradient systems may not converge strongly.
Let's examine some practical interest situations where 
strong convergence of the trajectories of $ \mbox{{\rm(AVD)}}_{\alpha,g}$ is satisfied.

\smallskip

\noindent \textbf{Strong convergence under $int(\argmin \Phi)\neq \emptyset $.}\\
We will need the following result, see (\cite{APR1}, Lemma 5.4).
\begin{lemma}\label{strong-lem1} 
Suppose that $\delta > 0$, and let $f : [\delta ;+\infty [ \to \mathcal H$ be a continuous function that satisfies 
$f \in L^{1}(\delta;+\infty ;\mathcal H)$.
Suppose that $\alpha > 1$ and $x : [\delta ;+\infty [\to \mathcal H$ is a classical solution of
$$t\ddot{x}(t)+\alpha\dot{x}(t)=f(t).$$
Then, $x(t)$ converges strongly in $\mathcal H$ as $t \to \infty$.
\end{lemma}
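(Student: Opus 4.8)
The plan is to solve the linear second-order equation explicitly, by regarding it as a first-order linear ODE for the velocity $v=\dot x$, and then to deduce that $\dot x\in L^1(\delta,+\infty;\mathcal H)$, which by the Cauchy criterion at infinity forces strong convergence of $x(t)$.

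First I would rewrite $t\ddot x(t)+\alpha\dot x(t)=f(t)$ as $\dot v(t)+\frac{\alpha}{t}v(t)=\frac{1}{t}f(t)$ with $v=\dot x$. Multiplying by the integrating factor $t^{\alpha}$ gives $\frac{d}{dt}\bigl(t^{\alpha}v(t)\bigr)=t^{\alpha-1}f(t)$, hence, integrating from $\delta$ to $t$,
\[
v(t)=\frac{\delta^{\alpha}v(\delta)}{t^{\alpha}}+\frac{1}{t^{\alpha}}\int_{\delta}^{t}s^{\alpha-1}f(s)\,ds .
\]
This is the key formula; the rest is estimation.

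Next I would bound $\int_{\delta}^{+\infty}\|v(t)\|\,dt$. The first term contributes $\|v(\delta)\|\,\delta^{\alpha}\int_{\delta}^{+\infty}t^{-\alpha}\,dt$, which is finite precisely because $\alpha>1$. For the second term, taking norms and exchanging the order of integration by Tonelli's theorem (the integrand is nonnegative),
\[
\int_{\delta}^{+\infty}\frac{1}{t^{\alpha}}\Bigl(\int_{\delta}^{t}s^{\alpha-1}\|f(s)\|\,ds\Bigr)dt
=\int_{\delta}^{+\infty}s^{\alpha-1}\|f(s)\|\Bigl(\int_{s}^{+\infty}t^{-\alpha}\,dt\Bigr)ds
=\frac{1}{\alpha-1}\int_{\delta}^{+\infty}\|f(s)\|\,ds<+\infty ,
\]
again using $\alpha>1$ together with $f\in L^1$. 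Hence $\dot x\in L^1(\delta,+\infty;\mathcal H)$.

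Finally, for $\delta\le t'\le t$ one has $\|x(t)-x(t')\|\le\int_{t'}^{t}\|\dot x(\tau)\|\,d\tau$, which tends to $0$ as $t',t\to+\infty$ by integrability of $\|\dot x\|$; thus $x(t)$ is Cauchy at infinity and converges strongly in $\mathcal H$. I do not expect a genuine obstacle here: the only points needing mild care are the justification of the Fubini--Tonelli exchange and the systematic use of the hypothesis $\alpha>1$, which is exactly what makes both $\int_{s}^{+\infty}t^{-\alpha}\,dt$ and the homogeneous contribution finite.
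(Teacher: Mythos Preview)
Your argument is correct. The paper itself does not give a proof of this lemma; it merely states the result and refers to \cite[Lemma~5.4]{APR1}. Your approach---rewriting the equation as a first-order linear ODE for $v=\dot x$, solving via the integrating factor $t^{\alpha}$, and then using Fubini--Tonelli together with $\alpha>1$ to show $\dot x\in L^{1}(\delta,+\infty;\mathcal H)$---is the standard one and is precisely what the cited reference does. The same integrating-factor computation appears elsewhere in the present paper (e.g., in the derivation of \eqref{even2} and in Lemma~\ref{basic-edo}), so your method is fully aligned with the techniques used here.
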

\begin{Theorem}
Suppose that  $\alpha > 3$, $\displaystyle{\int_{t_0}^{+\infty}}tg(t)dt<+\infty$, and  $\Phi$ satisfises $int(\argmin \Phi)\neq \emptyset$. Let $x(\cdot)$ be a classical global solution of equation (\ref{edo01}).
Then, there exists some $x^{*} \in \argmin \,\Phi$ such that $x(t) \to x^{*}$ strongly as $t \to +\infty$.
\end{Theorem}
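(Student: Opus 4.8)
The plan is to reduce the statement to Lemma \ref{strong-lem1} by rewriting $\mbox{(AVD)}_{\alpha,g}$ in the form
\[
t\ddot{x}(t)+\alpha\dot{x}(t)=f(t),\qquad f(t):=t\bigl(g(t)-\nabla\Phi(x(t))\bigr),
\]
and proving that $f\in L^{1}(t_0,+\infty;\mathcal H)$. Since $\int_{t_0}^{+\infty}t\|g(t)\|\,dt<+\infty$ by hypothesis, everything hinges on the estimate
\[
\int_{t_0}^{+\infty}t\,\|\nabla\Phi(x(t))\|\,dt<+\infty,
\]
and this is precisely where the assumption $\mathrm{int}(\argmin\Phi)\neq\emptyset$ enters. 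Once $f\in L^1$, Lemma \ref{strong-lem1} (valid since $\alpha>3>1$) yields the strong convergence $x(t)\to x^{*}$, and $x^{*}\in\argmin\Phi$ follows from continuity of $\Phi$ together with $\Phi(x(t))\to\min_{\mathcal H}\Phi$ (alternatively, from Theorem \ref{Thm-weak-conv}).

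First I would exploit the interior hypothesis: fix $\xb\in\argmin\Phi$ and $\rho>0$ with $B(\xb,\rho)\subset\argmin\Phi$, so that $\nabla\Phi$ vanishes on $B(\xb,\rho)$. Monotonicity of $\nabla\Phi$ applied to the pair $x(t)$ and $\xb+\rho z$ with $\|z\|\le 1$ gives $\langle\nabla\Phi(x(t)),x(t)-\xb\rangle\ge\rho\langle\nabla\Phi(x(t)),z\rangle$; taking the supremum over $\|z\|\le1$ produces the pointwise bound
\[
\rho\,\|\nabla\Phi(x(t))\|\ \le\ \langle\nabla\Phi(x(t)),\,x(t)-\xb\rangle .
\]
Note that the right-hand side is nonnegative (again by monotonicity, since $\nabla\Phi(\xb)=0$). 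Hence it suffices to bound $\int_{t_0}^{+\infty}t\,\langle\nabla\Phi(x(t)),x(t)-\xb\rangle\,dt$.

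To do this, substitute $\nabla\Phi(x(t))=g(t)-\ddot{x}(t)-\tfrac{\alpha}{t}\dot{x}(t)$ from (\ref{edo01}) and introduce $h(t)=\tfrac12\|x(t)-\xb\|^2$, so that $\dot h=\langle x-\xb,\dot x\rangle$ and $\langle x-\xb,\ddot x\rangle=\ddot h-\|\dot x\|^2$. This yields
\[
t\,\langle\nabla\Phi(x(t)),x(t)-\xb\rangle=t\langle g(t),x(t)-\xb\rangle-t\ddot h(t)+t\|\dot x(t)\|^2-\alpha\dot h(t).
\]
Integrating on $[t_0,T]$, I would control each term uniformly in $T$: the $g$-term is bounded because the orbit is bounded by (\ref{scale-energy9}) and $\int t\|g\|<\infty$; $\int_{t_0}^T t\|\dot x\|^2\,dt$ is bounded by (\ref{energy1}); $\int_{t_0}^T\alpha\dot h=\alpha(h(T)-h(t_0))$ is bounded since $h$ is bounded; and $\int_{t_0}^T t\ddot h\,dt=T\dot h(T)-t_0\dot h(t_0)-(h(T)-h(t_0))$ after integration by parts, with $T\dot h(T)=\langle x(T)-\xb,\,T\dot x(T)\rangle$ bounded thanks to $\sup_t\|t\dot x(t)\|<\infty$ from (\ref{scale-energy7}) and boundedness of the orbit. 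Therefore $\int_{t_0}^{T}t\,\langle\nabla\Phi(x(t)),x(t)-\xb\rangle\,dt$ is bounded uniformly in $T$; since the integrand is nonnegative, the integral converges, and the pointwise bound above gives $\int_{t_0}^{+\infty}t\|\nabla\Phi(x(t))\|\,dt<+\infty$, hence $f\in L^1$.

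The main obstacle is this middle step: extracting $\int t\|\nabla\Phi(x(t))\|\,dt<\infty$ from the interior condition and the previously established energy bounds, via the integration-by-parts rewriting of $\int t\ddot h\,dt$ and the careful collection of boundary and integral terms. The rest — the reduction to $t\ddot x+\alpha\dot x=f$, the invocation of Lemma \ref{strong-lem1}, and the identification of the limit as a minimizer — is routine bookkeeping.
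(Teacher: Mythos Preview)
Your proposal is correct, and it follows the same overall architecture as the paper: establish the interior-point inequality $\rho\|\nabla\Phi(x(t))\|\le\langle\nabla\Phi(x(t)),x(t)-\xb\rangle$, deduce $\int_{t_0}^{+\infty}t\|\nabla\Phi(x(t))\|\,dt<+\infty$, rewrite the equation as $t\ddot x+\alpha\dot x=f$ with $f\in L^1$, and invoke Lemma~\ref{strong-lem1}.

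The one substantive difference is in how you bound $\int_{t_0}^{+\infty}t\,\langle\nabla\Phi(x(t)),x(t)-\xb\rangle\,dt$. The paper appeals to the Lyapunov identity (\ref{rfastoche1}), written at the point $\xb$, namely $\dot{\mathcal E}_{\alpha,g}(t)=\frac{4}{\alpha-1}t(\Phi(x(t))-\inf\Phi)-2t\langle x(t)-\xb,\nabla\Phi(x(t))\rangle$, and then integrates using the already-known estimate $\int t(\Phi(x(t))-\inf\Phi)\,dt<+\infty$ from (\ref{basic-estim-2}) together with the lower bound on $\mathcal E_{\alpha,g}$. You instead substitute the ODE directly, introduce the anchor $h(t)=\tfrac12\|x(t)-\xb\|^2$, and control the resulting terms by integration by parts and the energy estimates (\ref{energy1}), (\ref{scale-energy7}), (\ref{scale-energy9}). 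Both routes draw on consequences of Theorem~\ref{Thm-weak-conv} (and hence on $\alpha>3$); the paper's is slightly shorter because the Lyapunov machinery is already in place, while yours is a touch more self-contained in that it avoids reusing $\mathcal E_{\alpha,g}$ and the estimate on $\int t(\Phi-\inf\Phi)$.
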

\begin{proof}
We follow the same approach as that proposed in \cite[Theorem 3.1]{APR1}. We first observe that the assumption $int(\argmin \Phi)\neq \emptyset$ implies the existence of  some $\bar{z}\in \mathcal H$ and  $\rho > 0$ such that, for all $x\in \mathcal H$, $\langle\nabla\Phi (x),x-\bar{z} \rangle\geq\rho\Vert\nabla\Phi(x)\Vert$.
In particular, for all $t \geq t_0$
$$\langle\nabla\Phi (x(t)),x(t)-\bar{z} \rangle\geq\rho\Vert\nabla\Phi(x(t))\Vert .$$
 Combining this inequality with (\ref{rfastoche1}) (that we recall below)
$$\dot{\mathcal E}_{\alpha,g} (t)= \frac{4}{\alpha-1}t (\Phi(x(t))- \inf_{\mathcal H}\Phi ) -  2 t \langle  x(t) - \bar{z} , \nabla \Phi (x(t))   \rangle
$$
we obtain
\begin{equation}\label{Strong1}
\dot{\mathcal E}_{\alpha,g} (t)+2\rho t\Vert\nabla\Phi (x(t))\Vert\leq  \frac{4}{\alpha-1}t (\Phi(x(t))- \inf_{\mathcal H}\Phi ).
\end{equation}
Let us return to
(\ref{basic-energy-Liap}), which after integration,  and using $\alpha >3$, gives 
$$\int_{t_{0}}^{\infty}t(\Phi (x(t))-\inf_{\mathcal H}\Phi ) dt<+\infty.$$
 As a consequence, by integrating  (\ref{Strong1}), we deduce that
$$\int_{t_{0}}^{\infty}t\Vert\nabla\Phi (x(t))\Vert dt<+\infty.$$
By setting $f(t)=tg(t)-t\nabla \Phi (x(t))$, we can rewrite equation (\ref{edo01}) as
$$t\ddot{x}(t)+\alpha\dot{x}(t)=f(t).$$
 Since all assumptions of Lemma \ref{strong-lem1} are satisfied, we can affirm that $x(t)$ converges strongly  to some $x^{*}\in \mathcal H$. Recalling that $\Phi(x(t))\rightarrow\inf_{\mathcal H}\Phi$ and that $\Phi$ is continuous, we obtain  $x^{*}\in \argmin \,\Phi$.
\end{proof}

\smallskip

\noindent \textbf{Strong convergence in the case of an even function.}\\
Recall  that $\Phi:\mathcal H \rightarrow \mathbb R$ is an even function if $\Phi(-x)=\Phi(x)$ for all $x\in \mathcal H$. In this case, $0\in\argmin_{\mathcal{H}}\Phi$.
\begin{Theorem}
Suppose that $\alpha >3$, $\displaystyle{\int_{t_0}^{+\infty}}tg(t)dt<+\infty$,  and  $\Phi$ is an even function. Let $x(\cdot)$ be a classical global solution of equation (\ref{edo01}).
Then, there exists some $\bar{x}\in \argmin_{\mathcal{H}}\Phi$ such that $x(t)$ converges strongly to $\bar{x}$ as $t\to +\infty$. 
\end{Theorem}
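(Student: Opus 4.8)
The plan is to follow the line of argument used by Attouch--Peypouquet--Redont in \cite{APR1} for even functions, adapted to the perturbed dynamics $ \mbox{{\rm(AVD)}}_{\alpha,g}$ exactly as in the two preceding strong-convergence results. The starting observation is that evenness of $\Phi$ forces $0\in\argmin\Phi$ and makes $\nabla\Phi$ an odd operator; writing the subgradient inequality of $\Phi$ at the minimizer $0$ then gives the pointwise estimate
\[
\langle\nabla\Phi(x),x\rangle\ \geq\ \Phi(x)-\min_{\mathcal H}\Phi\ \geq\ 0\qquad\text{for every }x\in\mathcal H .
\]
Consequently all the conclusions of Theorem \ref{Thm-weak-conv} are at our disposal (taking $x^{*}=0$ whenever a minimizer is needed): the orbit is bounded, $x(t)\rightharpoonup\bar{x}$ for some $\bar{x}\in\argmin\Phi$, $\sup_{t}\|t\dot{x}(t)\|<+\infty$, $\int_{t_0}^{+\infty}t\|\dot{x}(t)\|^{2}dt<+\infty$, $\int_{t_0}^{+\infty}t\,(\Phi(x(t))-\min_{\mathcal H}\Phi)\,dt<+\infty$, and $\lim_{t\to+\infty}\|x(t)-z\|$ exists for every $z\in\argmin\Phi$.

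The key intermediate step is to extract from evenness an estimate which is not available for general $\Phi$. Taking the scalar product of $ \mbox{{\rm(AVD)}}_{\alpha,g}$ with $t\,x(t)$, using the identity $t\langle\ddot{x}(t),x(t)\rangle=\frac{d}{dt}\big(t\langle\dot{x}(t),x(t)\rangle\big)-\langle\dot{x}(t),x(t)\rangle-t\|\dot{x}(t)\|^{2}$, and integrating on $[t_0,t]$, one checks that all the resulting boundary and integral terms are controlled by $\sup_t\|t\dot{x}(t)\|$, $\sup_t\|x(t)\|$, $\int t\|\dot{x}\|^{2}dt$ and $\int t\|g\|\,dt$; since by the displayed inequality the integrand $t\langle\nabla\Phi(x(t)),x(t)\rangle$ is $\geq 0$, this yields $\int_{t_0}^{+\infty}t\,\langle\nabla\Phi(x(t)),x(t)\rangle\,dt<+\infty$. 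Combined with the co-coercivity bound $\|\nabla\Phi(x(t))\|^{2}\leq 2L\,\langle\nabla\Phi(x(t)),x(t)\rangle$ (valid because the orbit and $\nabla\Phi(x(t))$ stay in a fixed bounded set, on which $\nabla\Phi$ is $L$-Lipschitz and $\nabla\Phi(0)=0$), this gives in particular $\int_{t_0}^{+\infty}t\,\|\nabla\Phi(x(t))\|^{2}\,dt<+\infty$.

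To conclude I would write $ \mbox{{\rm(AVD)}}_{\alpha,g}$ in the form $t\ddot{x}(t)+\alpha\dot{x}(t)=f(t)$ with $f(t)=t g(t)-t\nabla\Phi(x(t))$ and reduce the strong convergence to Lemma \ref{strong-lem1} by showing $\dot{x}\in L^{1}(t_0,+\infty;\mathcal H)$; the limit will then lie in $\argmin\Phi$ because $\Phi(x(t))\to\min_{\mathcal H}\Phi$ and $\Phi$ is continuous. This last step is the main obstacle. In contrast with the case $int(\argmin\Phi)\neq\emptyset$, the source term $f$ need not be in $L^{1}$: already for $\Phi(x)=\|x\|^{2}$ and $3<\alpha\leq 4$ one has $\int_{t_0}^{+\infty}t\,\|\nabla\Phi(x(t))\|\,dt=+\infty$, so Lemma \ref{strong-lem1} cannot be applied to $f$ as it stands. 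The point is rather to use the Duhamel representation $\dot{x}(t)=t^{-\alpha}\big(t_0^{\alpha}\dot{x}(t_0)+\int_{t_0}^{t}s^{\alpha-1}f(s)\,ds\big)$: the contribution of $sg(s)$ is handled by Fubini and $\int t\|g\|\,dt<+\infty$, while the contribution of $s\nabla\Phi(x(s))$ must be estimated by exploiting the additional integrability $\int t\|\nabla\Phi(x(t))\|^{2}dt<+\infty$ together with the cancellation (an integration by parts) hidden in the oscillatory integral $\int_{t_0}^{t}s^{\alpha-1}\nabla\Phi(x(s))\,ds$, so as to still obtain $\dot{x}\in L^{1}$. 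Making this by-parts/cancellation argument precise — where the hypotheses $\alpha>3$ and $\int_{t_0}^{+\infty}t\|g(t)\|\,dt<+\infty$ are exactly what let the bounds close — is the delicate part of the proof.
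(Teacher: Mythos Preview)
Your intermediate estimate $\int_{t_0}^{+\infty}t\,\langle\nabla\Phi(x(t)),x(t)\rangle\,dt<+\infty$ (and hence $\int_{t_0}^{+\infty}t\,\|\nabla\Phi(x(t))\|^{2}\,dt<+\infty$ via cocoercivity) is correct and interesting, but the closing step is not a proof: you acknowledge yourself that $t\nabla\Phi(x(t))$ need not lie in $L^{1}$, so Lemma~\ref{strong-lem1} is unavailable, and the ``oscillatory cancellation / integration by parts'' you allude to is never carried out. In fact, unwinding the Duhamel formula by Fubini shows that the contribution of $-t\nabla\Phi(x(t))$ to $\int\|\dot{x}\|\,dt$ is exactly $\frac{1}{\alpha-1}\int t\|\nabla\Phi(x(t))\|\,dt$, which is the very integral you observed can diverge; and the only obvious integration by parts, using $\nabla\Phi(x)=g-\ddot{x}-\frac{\alpha}{t}\dot{x}$, is circular. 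So as written there is a genuine gap: nothing in your outline bridges the $L^{2}$-type control of $\nabla\Phi(x(\cdot))$ to the $L^{1}$-integrability of $\dot{x}$ (or to any other route to strong convergence).

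The paper proceeds quite differently and never attempts to put $\dot{x}$ in $L^{1}$. For fixed $r>t_0$ it introduces the auxiliary function
\[
y(\tau)=\|x(\tau)\|^{2}-\|x(r)\|^{2}-\tfrac{1}{2}\|x(\tau)-x(r)\|^{2},
\]
and shows, using evenness through the convex inequality $\Phi(x(r))=\Phi(-x(r))\geq\Phi(x(\tau))-\langle\nabla\Phi(x(\tau)),x(\tau)+x(r)\rangle$ combined with the decrease of the energy $W$, that $\ddot{y}(\tau)+\frac{\alpha}{\tau}\dot{y}(\tau)\leq k(\tau)$ with $\tau k(\tau)\in L^{1}(t_0,+\infty)$ independently of $r$. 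Integrating as in Lemma~\ref{basic-edo} and then from $t$ to $r$ yields
\[
\tfrac{1}{2}\|x(t)-x(r)\|^{2}\ \leq\ \|x(t)\|^{2}-\|x(r)\|^{2}+\int_{t}^{r}K(\tau)\,d\tau
\]
with $K\in L^{1}$; since $0\in\argmin\Phi$, $\lim_{t\to+\infty}\|x(t)\|$ exists, and the right-hand side tends to $0$ as $t,r\to+\infty$, giving the Cauchy property directly. This bypasses entirely the question of whether $t\nabla\Phi(x(t))$ or $\dot{x}$ is integrable.
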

\begin{proof}
Set, for $t_0 \leq \tau \leq r$,
$$y(\tau )=\Vert x(\tau)\Vert^{2}-\Vert x(r)\Vert^{2}-\frac{1}{2}\Vert x(\tau ) -x(r)\Vert^{2}.$$
By derivating twice, we obtain
$$\dot{y}(\tau)=\langle \dot{x}(\tau), x(\tau)+x(r) \rangle$$
and
$$\ddot{y}(\tau)=\Vert \dot{x}(\tau)\Vert^{2}+\langle \ddot{x}(\tau), x(\tau)+x(r) \rangle.$$
From these two equations and (1), we deduce that
\begin{equation}\label{even} 
\begin{array}{lll}
\ddot{y}(\tau)+\frac{\alpha}{\tau}\dot{y}(\tau)&=&\Vert \dot{x}(\tau)\Vert^{2}+\langle \ddot{x}(\tau)+\frac{\alpha}{\tau}\dot{x}(\tau), x(\tau)+x(r) \rangle \vspace{2mm}\\
 &=& \Vert \dot{x}(\tau)\Vert^{2}+\langle g(\tau)-\nabla\Phi (x(\tau)), x(\tau)+x(r) \rangle .
\end{array}
\end{equation}
Let us now consider the energy function, $W(\tau )=\frac{1}{2}\Vert \dot{x}(\tau)\Vert^{2}+\Phi(x(\tau))+\int_{\tau}^{\infty}\langle\dot{x}(t),g(t)dt\rangle$. We have $\frac{d}{d\tau}W(\tau)=-\frac{\alpha}{\tau}\Vert \dot{x}(\tau)\Vert^{2}$, and therefore $W$ is a nonincreasing function. As a consequence, $W(\tau)\geq W(r)$, which equivalently gives
$$\frac{1}{2}\Vert \dot{x}(\tau)\Vert^{2}+\Phi(x(\tau))\geq \frac{1}{2}\Vert \dot{x}(r)\Vert^{2}+\Phi(x(r))-\int_{\tau}^{r}\langle\dot{x}(t),g(t)\rangle dt.$$
Using the convex differential inequality $\Phi (-x(r))\geq \Phi (x(\tau))-\langle\nabla\Phi (x(\tau)), x(\tau)+x(r)\rangle$, and the even property of $\Phi$, $\Phi(x(r))=\Phi(-x(r))$, we  deduce that 
$$\frac{1}{2}\Vert \dot{x}(\tau)\Vert^{2}\geq -\langle\nabla\Phi (x(\tau)), x(\tau)+x(r)\rangle -\int_{\tau}^{r}\langle\dot{x}(t),g(t)\rangle dt.$$
Returning to (\ref{even}), we finally obtain
$$\ddot{y}(\tau)+\frac{\alpha}{\tau}\dot{y}(\tau)\leq\frac{3}{2}\Vert \dot{x}(\tau)\Vert^{2}+\langle g(\tau),x(\tau)+x(r)\rangle +\int_{\tau}^{r}\langle\dot{x}(t),g(t)\rangle dt .$$
Let us recall that, by Theorem \ref{Thm-weak-conv}, the trajectory $x(\cdot)$ is converging weakly, and hence bounded. Moreover, by  (\ref{conv1}), we have
$\| \dot{x}(t)  \| \leq \frac{C}{t} $.
Hence, for some constant $C$ 
\begin{equation}\label{even1} 
\ddot{y}(\tau)+\frac{\alpha}{\tau}\dot{y}(\tau)\leq k(\tau):= \frac{3}{2}\Vert \dot{x}(\tau)\Vert^{2}+ C \| g(\tau)\| + C\int_{\tau}^{+ \infty}\frac{1}{t}\|g(t)\| dt. 
\end{equation}
Let us observe that the function $k$ does not depend on $r$. Let us verify that $\tau \mapsto \tau k(\tau) \in L^{1}(t_0,+\infty)$. By Theorem \ref{Thm-weak-conv}, we have 
$\int_{t_0}^{\infty}  t\| \dot{x}(t) \|^2  dt < + \infty$. By assumption, $\int_{t_0}^{+\infty}tg(t)dt<+\infty$.
Moreover, by Fubini theorem
$$
\int_{t_0}^{\infty}\tau \int_{\tau}^{\infty} \frac{1}{t}\|g(t)\| dt d\tau \leq \frac{1}{2} \int_{t_0}^{\infty} t\|g(t)\| dt < +\infty. 
$$
By integration of (\ref{even1}), by a similar argument as in Lemma \ref{basic-edo},  we obtain  
\begin{equation}\label{even2}
\dot{y}(\tau) \leq  \frac{C}{\tau^{\alpha} }  +  \frac{1}{\tau^{\alpha} }  \int_{t_0}^\tau  u^{\alpha} k(u) du,
\end{equation}
where $C={t_0}^{\alpha}\|\dot x(t_0)\|\,\|x\|_{\infty}$. 
Set
$$
K(t):= \frac{C}{\tau^{\alpha} }  +  \frac{1}{\tau^{\alpha} }  \int_{t_0}^\tau  u^{\alpha} k(u) du .
$$
By using Fubini theorem once more, and the fact that 
$\tau \mapsto \tau k(\tau) \in L^{1}(t_0,+\infty)$, we deduce that $K\in  L^{1}(t_0,+\infty)$.
 Integrating $ \dot{y}(\tau) \leq  K(\tau) $ from $t$ to $r$, we obtain
$$
 \frac{1}{2} \|x(t)- x(r) \|^2 \leq  \| x(t) \|^2 -\| x(r) \|^2 
+ \int_t^r K(\tau) d\tau.
$$
Since $\Phi$ is even, we have $0 \in\argmin \Phi$. Hence 
$\lim_{t\to +\infty }\| x(t) \|^2$ exists (see the proof of Theorem \ref{Thm-weak-conv}). As a consequence, $x(t)$ has the Cauchy property as $t \to + \infty$, and hence converges.

\if {
After multiplication of this inequality  by $\tau$, and integration on $[t_0, \theta]$,  with $\theta >r$, we obtain
\begin{equation}\label{even1} 
\int_{t_0}^{\theta}|\tau\kappa(\tau)|dt\leq \frac{3}{2}\int_{t_0}^{+\infty}\tau\Vert\dot{x}(\tau)\Vert^{2}d\tau
+ C \int_{t_0}^{+\infty}\tau \Vert g(\tau)\Vert d\tau +C\int_{t_0}^{\theta}\tau\int_{\tau}^{r}\frac{1}{t}\Vert g(t)\Vert dt d\tau .
\end{equation}
Integrating by parts the last term in this inequality,
\[
\begin{array}{lll}
\int_{t_0}^{\theta}\tau\int_{\tau}^{r}\Vert g(t)\Vert dt d\tau &=& \left[ \frac{\tau^{2}}{2}\int_{\tau}^{r}\frac{1}{t}\Vert g(t)\Vert dt\right]_{\delta}^{\theta}+\int_{t_0}^{\theta}\frac{\tau^{2}}{2}\times \frac{1}{\tau}g(\tau)d\tau \vspace{2mm}\\
&\leq & -\frac{\theta^{2}}{2}\int_{r}^{\theta} \Vert g(t)\Vert dt+\frac{1}{2}\int_{\delta}^{+\infty}\tau^{2}g(\tau)d\tau .
\end{array}
\]
All the terms in the second part on equation (\ref{even1}) are in $L^{1}(\delta,+\infty)$ and we conclude that $\dot{y}^{+}\in L^{1}(\delta,+\infty),$ which implies that the limit of $y(t)$ exists, as $t\to +\infty$.
}
\fi

\end{proof}

\section{The case $\argmin \Phi =\emptyset .$}
\begin{Theorem}\label{Thm-armin-empty}
Suppose $\alpha >0 $, $\displaystyle{\int_{t_0}^{+\infty} \|g(t)\| dt < + \infty}$, and $\inf \Phi >-\infty$.  Then, for any  orbit  $x: [t_0, +\infty[ \rightarrow \mathcal H$   of $ \mbox{{\rm(AVD)}}_{\alpha,g}$,  the following
minimizing property holds
$$\lim_{t\rightarrow +\infty}\Phi (x(t))=\inf_{\mathcal H}\Phi. $$
\end{Theorem}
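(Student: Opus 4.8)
The plan is to follow the Lyapunov/energy route already used for Proposition \ref{energy-thm-1}, combined with a Cabot--Engler--Gaddat type argument to identify the limiting energy level.

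\textbf{Step 1 (energy bounds).} Since $\inf_{\mathcal H}\Phi>-\infty$, the quantity $\psi(t):=\Phi(x(t))-\inf_{\mathcal H}\Phi$ is nonnegative, and nonnegativity of $\Phi(x(\cdot))-\inf\Phi$ is all that the proof of Proposition \ref{energy-thm-1} uses; so, reading that proof with $\inf_{\mathcal H}\Phi$ in place of $\min_{\mathcal H}\Phi$, one gets $M:=\sup_{t\ge t_0}\|\dot x(t)\|<+\infty$ and $\int_{t_0}^{+\infty}\frac1t\|\dot x(t)\|^2\,dt<+\infty$. Consequently the energy
\[
W(t)=\tfrac12\|\dot x(t)\|^2+\psi(t)+\int_t^{+\infty}\langle\dot x(\tau),g(\tau)\rangle\,d\tau
\]
is well defined (since $\langle\dot x,g\rangle\in L^1$, because $\|\dot x\|\le M$ and $g\in L^1$), it satisfies $\dot W(t)=-\frac\alpha t\|\dot x(t)\|^2\le 0$, and it is bounded below by $-M\|g\|_{L^1(t_0,\infty)}$; hence $W(t)$ decreases to a limit $W_\infty\in\mathbb R$.

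\textbf{Step 2 (reduction).} The remainder $\int_t^{+\infty}\langle\dot x,g\rangle\,d\tau$ tends to $0$, so $\phi(t):=\tfrac12\|\dot x(t)\|^2+\psi(t)\to W_\infty$; since both summands are nonnegative, $W_\infty\ge 0$ and $\limsup_{t\to+\infty}\psi(t)\le W_\infty$. Thus it suffices to prove $W_\infty=0$: this forces $\limsup_t\psi(t)\le0$, and with $\psi\ge0$ we conclude $\psi(t)\to0$, i.e. $\Phi(x(t))\to\inf_{\mathcal H}\Phi$. Moreover, pairing $\int_{t_0}^{+\infty}\frac1t\|\dot x(t)\|^2\,dt<+\infty$ with $\int_{t_0}^{+\infty}\frac{dt}{t}=+\infty$ (on each interval $[2^kt_0,2^{k+1}t_0]$ choose a point $s_k$ at which $\|\dot x(s_k)\|^2$ does not exceed $\frac1{\ln2}\int_{2^kt_0}^{2^{k+1}t_0}\frac1t\|\dot x(t)\|^2\,dt$) produces $s_k\to+\infty$ with $\dot x(s_k)\to0$, hence $\psi(s_k)\to W_\infty$.

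\textbf{Step 3 (the limiting energy is zero).} I would prove $W_\infty=0$ by contradiction. Assume $W_\infty>0$; fix $\varepsilon\in(0,W_\infty)$ and $y_\varepsilon\in\mathcal H$ with $\Phi(y_\varepsilon)<\inf_{\mathcal H}\Phi+\varepsilon$. For the anchor $h(t)=\tfrac12\|x(t)-y_\varepsilon\|^2$, equation (\ref{energy-00}) together with the convexity inequality $\langle\nabla\Phi(x(t)),x(t)-y_\varepsilon\rangle\ge\Phi(x(t))-\Phi(y_\varepsilon)>\psi(t)-\varepsilon$ yields
\[
\ddot h(t)+\tfrac\alpha t\dot h(t)\ \le\ \|\dot x(t)\|^2-\bigl(\psi(t)-\varepsilon\bigr)+\|x(t)-y_\varepsilon\|\,\|g(t)\|,
\]
and, using $\|\dot x(t)\|^2=2(\phi(t)-\psi(t))\le 2(W_\infty-\psi(t))+o(1)$, the right-hand side is at most $2W_\infty-3\psi(t)+\varepsilon+o(1)+\|x(t)-y_\varepsilon\|\,\|g(t)\|$. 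One then integrates this inequality against the factor $t^\alpha$, controls $\dot h$ via $\sqrt{2h(t)}\,M$, and exploits that $\phi(t)\to W_\infty$ together with $\int_{t_0}^{+\infty}\frac1t\|\dot x\|^2\,dt<+\infty$ forces $\psi(t)$ to stay above a threshold $\delta$ with $\tfrac{2W_\infty}{3}<\delta<W_\infty$ on a set of infinite ``logarithmic measure'' $\int\frac{dt}{t}$; carried far enough, this contradicts the lower boundedness of $W$, so $W_\infty=0$, which finishes the proof.

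The delicate point is precisely this last step. In contrast with Theorems \ref{fastconv-thm} and \ref{Thm-weak-conv}, here neither $\argmin\Phi\ne\emptyset$ nor boundedness of the orbit is at hand --- one only knows $\|x(t)-y_\varepsilon\|=O(t)$ from $\|\dot x\|\le M$ --- while only $\int_{t_0}^{+\infty}\|g(t)\|\,dt<+\infty$ (not $\int t\|g\|$) is assumed. Dominating the perturbation term $\|x(t)-y_\varepsilon\|\,\|g(t)\|$ and taming the oscillations of the kinetic energy without these a priori estimates is what requires the Cabot--Engler--Gaddat machinery, and is the crux of the argument; everything else is routine given Proposition \ref{energy-thm-1}.
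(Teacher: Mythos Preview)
Your Steps 1 and 2 are correct and essentially coincide with the paper: the energy $W$ is well defined, nonincreasing, bounded below, hence convergent to some $W_\infty\ge 0$, and the problem reduces to showing $W_\infty\le 0$.

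Step 3, however, is not a proof but a sketch with a real gap. Two concrete issues: (i) the weighting you announce (``integrate against the factor $t^\alpha$'') is incompatible with the ``infinite logarithmic measure'' argument you invoke a line later---logarithmic measure is what you see when you integrate against $1/t$, not $t^\alpha$; (ii) the contradiction you claim is with ``the lower boundedness of $W$'', but nothing in your inequality for $h$ feeds back into $W$; what one should contradict is the nonnegativity of $h$ (or the a priori control $|\dot h(t)|\le M\|x(t)-y_\varepsilon\|$). Finally, you yourself flag that the perturbation term $\|x(t)-y_\varepsilon\|\,\|g(t)\|$ is the delicate point and then defer to ``Cabot--Engler--Gaddat machinery'' without saying what that machinery does here. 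As written, Step 3 does not close.

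The paper's argument for this step is direct (no contradiction) and quite short. Starting from the same inequality for the anchor $h(t)=\tfrac12\|x(t)-z\|^2$ (any $z\in\mathcal H$), one rewrites it with $W$ and uses $W(t)\ge W_\infty$ to get
\[
B_\infty\ \le\ \tfrac32\|\dot x(t)\|^2+\|g(t)\|\,\|x(t)-z\|+\|\dot x\|_{L^\infty}\int_t^\infty\|g\|\,ds-\frac{1}{t^\alpha}\frac{d}{dt}\bigl(t^\alpha\dot h(t)\bigr),
\]
where $B_\infty:=W_\infty+\inf\Phi-\Phi(z)$. One then multiplies by $1/t$ and integrates on $[t_0,\theta]$. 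The perturbation term is controlled by the elementary bound $\|x(t)-z\|\le\|x(t_0)-z\|+M(t-t_0)$ (from $\|\dot x\|\le M$), so that $\tfrac1t\|x(t)-z\|$ stays bounded and $\int\frac1t\|g(t)\|\,\|x(t)-z\|\,dt<\infty$; the remaining terms are handled by integration by parts. After dividing by $\ln(\theta/t_0)$ and letting $\theta\to\infty$, Lemma~\ref{basic-int} (with $\psi(t)=\ln t$) kills the residual $g$-contribution, yielding $B_\infty\le 0$, i.e.\ $W_\infty\le\Phi(z)-\inf\Phi$ for every $z$, hence $W_\infty\le 0$. This replaces your unfinished contradiction argument and resolves exactly the ``delicate point'' you identified.
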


We will use the following lemma, see \cite{APR1}.
\begin{lemma}\label{basic-int} 
Take $\delta >0$, and let $f \in L^1 (\delta , +\infty)$ be nonnegative. Consider a nondecreasing continuous function $\psi:(\delta,+\infty)\to(0,+\infty)$ such that $\lim\limits_{t\to+\infty}\psi(t)=+\infty$. Then, 
$$\lim_{t \rightarrow + \infty} \frac{1}{\psi(t)} \int_{\delta}^t \psi(s)f(s)ds =0.$$ 
\end{lemma}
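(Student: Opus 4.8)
The plan is to prove this by the standard tail-splitting technique for weighted Cesàro-type averages, exploiting the monotonicity of $\psi$ to reduce the weighted tail to an unweighted $L^1$ tail. Fix $\varepsilon > 0$. Since $f \in L^1(\delta, +\infty)$ is nonnegative, I first choose $T > \delta$ so that the tail is small: $\int_T^{+\infty} f(s)\,ds < \varepsilon$. For any $t > T$, I then split the integral at $T$, writing
$$\frac{1}{\psi(t)} \int_{\delta}^t \psi(s) f(s)\,ds = \frac{1}{\psi(t)} \int_{\delta}^T \psi(s) f(s)\,ds + \frac{1}{\psi(t)} \int_T^t \psi(s) f(s)\,ds.$$

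For the first term, I would observe that $\int_{\delta}^T \psi(s) f(s)\,ds$ is a fixed finite number: since $\psi$ is nondecreasing, $\psi(s) \leq \psi(T)$ on $[\delta, T]$, so this integral is bounded by $\psi(T)\int_{\delta}^T f(s)\,ds \leq \psi(T)\,\|f\|_{L^1(\delta,+\infty)} < +\infty$. Because $\psi(t) \to +\infty$ as $t \to +\infty$, the numerator is frozen while the denominator diverges, and hence this first term tends to $0$; in particular it is $< \varepsilon$ for all $t$ large enough.

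For the second term, the key step is that $\psi$ is nondecreasing, so $\psi(s) \leq \psi(t)$ for every $s \in [T, t]$. Using $f \geq 0$, this lets me replace $\psi(s)$ by $\psi(t)$ and cancel the denominator:
$$\frac{1}{\psi(t)} \int_T^t \psi(s) f(s)\,ds \leq \frac{1}{\psi(t)} \int_T^t \psi(t)\, f(s)\,ds = \int_T^t f(s)\,ds \leq \int_T^{+\infty} f(s)\,ds < \varepsilon.$$

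Combining the two estimates, for all sufficiently large $t$ the quantity $\frac{1}{\psi(t)} \int_{\delta}^t \psi(s) f(s)\,ds$ lies in the interval $[0, 2\varepsilon)$, the nonnegativity coming from $f \geq 0$ and $\psi > 0$. Since $\varepsilon > 0$ was arbitrary, the limit equals $0$, as claimed. There is no genuine obstacle in this argument; the only point requiring care is the role of the monotonicity of $\psi$, which is precisely what permits the bound $\psi(s) \leq \psi(t)$ on the recent interval $[T,t]$ and thereby converts the weighted remainder into the controllable unweighted $L^1$ tail of $f$, while the contribution of the early interval $[\delta,T]$ is killed by the divergence of $\psi(t)$.
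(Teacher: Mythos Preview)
Your proof is correct. The paper does not actually prove this lemma; it simply states it and cites \cite{APR1} for the proof. Your tail-splitting argument is the standard one and is exactly what one would expect the cited proof to be: choose $T$ so that $\int_T^{+\infty} f < \varepsilon$, use $\psi(t)\to+\infty$ to kill the fixed integral over $[\delta,T]$, and use monotonicity of $\psi$ to bound the integral over $[T,t]$ by the unweighted $L^1$ tail of $f$.
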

\textit{Proof of Theorem }\ref{Thm-armin-empty}.
Let us first return to the proof of the energy estimates in Proposition \ref{energy-thm-1}. 
Replacing $\inf \Phi$ by $\min \Phi$ in the expression of the energy function, we obtain by the same argument
\begin{align}\label{basic-est-empty}
  &  \sup_t \|  \dot{x}(t)  \| < + \infty  ,      \\ 
  & \int_{t_0}^{+\infty} \frac{1}{t} \| \dot{x}(t)  \|^2 dt < + \infty .   
\end{align}
Consider the function 
$h(t)=\frac{1}{2}\Vert x(t)-z\Vert^{2}$, where this time, $z$ is an arbitrary element of $\mathcal H$. We can easily verify that
$$\ddot{h}(t)+\frac{\alpha}{t}\dot{h}(t)= \Vert \dot{x}(t)\Vert^{2}-\langle \nabla \Phi(x(t)),x(t)-z\rangle+\langle g(t),x(t)-z\rangle.$$
By convexity of $\Phi$, we obtain
\begin{equation}\label{h1} 
\ddot{h}(t)+\frac{\alpha}{t}\dot{h}(t)+\Phi (x(t))-\Phi (z)\leq \Vert \dot{x}(t)\Vert^{2}+\langle g(t),x(t)-z\rangle.
\end{equation}
Consider the energy function
$$W(t)=\frac{1}{2}\Vert \dot{x}(t)\Vert^{2}+\Phi (x(t))-\inf \Phi +\int_{t}^{\infty}\langle\dot{x}(s),g(s)\rangle ds.$$
By classical derivation rules, and (\ref{edo01})
\[
\begin{array}{lll}
\displaystyle{\frac{d}{dt}}W(t)&=& \langle \dot{x}(t),\ddot{x}(t)+\nabla\Phi(x(t))-g(t)\rangle\\
    &=& -\dfrac{\alpha}{t}\Vert \dot{x}(t)\Vert^{2}\leq 0.
\end{array}
\]
As a consequence, $W$ is  a nonincreasing function. Moreover, $W$  is minorized by $-\Vert \dot{x}\Vert_{L^{\infty}}\int_{t_0}^{+\infty} \|g(s)\|ds.$ Hence, there exists some $W_{\infty} \in \mathbb R$ such that $W(t)\rightarrow W_{\infty}$ as $t\rightarrow\infty$.
Let us take advantage of this property, and reformulate  
(\ref{h1}) with the help of $W$:
 \begin{equation}\label{h1b} 
\ddot{h}(t)+\frac{\alpha}{t}\dot{h}(t)+W(t) + \inf \Phi -\Phi (z)\leq \frac{3}{2}\Vert \dot{x}(t)\Vert^{2}+\langle g(t),x(t)-z\rangle  +\int_{t}^{\infty}\langle\dot{x}(s),g(s)\rangle ds .
 \end{equation}
For every $t>0$, \ $ W(t)\geq W_{\infty}$. Setting $B_{\infty}=W_{\infty} +\inf \Phi-\Phi(z) $, we obtain
$$B_{\infty}\leq \frac{3}{2}\Vert \dot{x}(t)\Vert^{2}+\Vert g(t)\Vert\Vert x(t)-z\Vert+\Vert \dot{x}\Vert_{L^{\infty}}
\int_{t}^{\infty}\Vert g(s)\Vert ds-\frac{1}{t^{\alpha}}\frac{d}{dt}(t^{\alpha}\dot{h}(t)).$$
 Multiplying this last equation by $\frac{1}{t}$, and integrating between two reals $0< t_{0}<\theta$, we get
\begin{equation}\label{h2} 
B_{\infty} \ln (\frac{\theta}{t_{0}})\leq \frac{3}{2} \int_{t_{0}}^{\theta}\frac{1}{t}\Vert \dot{x}(t)\Vert^{2}dt+\int_{t_{0}}^{\theta}
\dfrac{\Vert g(t)\Vert\Vert x(t)-z\Vert}{t} dt+\Vert \dot{x}\Vert_{L^{\infty}} \int_{t_{0}}^{\theta} \left( \frac{1}{t}\int_{t}^{\infty}\Vert g(s)\Vert ds\right)  dt
-\int_{t_{0}}^{\theta}\frac{1}{t^{\alpha+1}}\frac{d}{dt}(t^{\alpha}\dot{h}(t))dt.
\end{equation}
Let us estimate the integrals in the second member of (\ref{h2}):
\begin{enumerate}
\item By (\ref{basic-est-empty}), $\int_{t_0}^{+\infty} \frac{1}{t} \| \dot{x}(t)  \|^2 dt < + \infty $.
\item
Exploiting the relation $\Vert x(t)-z\Vert \leq \Vert x(t_0)-z\Vert +\int_{t_0}^{t}\Vert\dot{x}(s)\Vert ds$, we obtain
$$\int_{t_{0}}^{\theta}\dfrac{\Vert g(t)\Vert\Vert x(t)-z\Vert}{t} dt\leq \left(  \frac{\Vert x_{0}-z\Vert}{t_{0}}+
\Vert \dot{x}\Vert_{L^{\infty}}\right) \int_{t_{0}}^{+\infty}\Vert g(t)\Vert dt< +\infty.$$
\item After integration by parts
$$
\int_{t_{0}}^{\theta} \left( \frac{1}{t}\int_{t}^{\infty}\Vert g(s)\Vert ds\right)  dt = \ln \theta \int_{\theta}^{\infty}\Vert g(s)\Vert ds - \ln t_0 \int_{t_0}^{\infty}\Vert g(s)\Vert ds + \int_{t_{0}}^{\theta} \Vert g(t)\Vert\ln t  \ dt.
$$
\item Set $I=\int_{t_{0}}^{\theta}\frac{1}{t^{\alpha+1}}\frac{d}{dt}(t^{\alpha}\dot{h}(t))dt.$ By integrating by parts twice
\[
\begin{array}{lll}
I&=& \left[ \frac{1}{t}\dot{h}(t)\right]_{t_{0}}^{\theta} +(\alpha+1)\int_{t_{0}}^{\theta}\frac{1}{t^{2}}\dot{h}(t)dt\\
 &=& C +\frac{1}{\theta}\dot{h}(\theta)  +\frac{(1+\alpha )}{\theta^{2}}h(\theta )+2(1+\alpha)\int_{t_{0}}^{\theta}
\frac{1}{t^{3}}h(t)dt.

\end{array}
\] 
Since $h\geq 0$, we have $-I\leq -C -\frac{1}{\theta}\dot{h}(\theta).$  Then notice  that 
$\vert\dot{h}(\theta )\vert=\vert\langle \dot{x}(\theta),x(\theta )-z\rangle\vert\leq \Vert\dot{x}\Vert_{L^{\infty}}(\Vert x(0)-z\Vert+\theta\Vert\dot{x}\Vert_{L^{\infty}})$. 
\end{enumerate}
Collecting the above results, we deduce from (\ref{h2}) that
\begin{equation}\label{h2b} 
B_{\infty} \ln (\frac{\theta}{t_{0}})\leq C +
\ln \theta \int_{\theta}^{\infty}\Vert g(s)\Vert ds  +\Vert \dot{x}\Vert_{L^{\infty}} \int_{t_{0}}^{\theta} \Vert g(t)\Vert\ln t  \ dt.
\end{equation}
 Dividing by $\ln (\frac{\theta}{t_{0}})$, and letting $\theta\rightarrow +\infty$,  thanks to Lemma \ref{basic-int} with $\psi (t) =\ln t$, we conclude that $B_{\infty} \leq 0.$ Equivalently, for every $z\in \mathcal H$, $W_{\infty}\leq \Phi (z)-\inf \Phi$, which leads to $W_{\infty} \leq 0.$ 
 
 On the other hand,  it is easy to see that $W(t)\geq \Phi(x(t))-\inf \Phi -\Vert\dot{x}\Vert_{L^{\infty}}\int_{t}^{+\infty}g(s)ds$. 
Passing to the limit, as $t\rightarrow+\infty$, we deduce that
$$0\geq W_{\infty}\geq\limsup \Phi(x(t))- \inf \Phi .$$ Since we always have $\inf \Phi\leq \liminf \Phi(x(t))$, we conclude that $\lim_{t\rightarrow +\infty}\Phi(x(t))=\inf\Phi.$
\quad \quad  \quad  \quad  \quad  \quad $\square$

\begin{remark} In \cite{APR}, in the unperturbed case $g=0$, it has been observed that, when $\argmin \Phi =\emptyset$, the fast convergence property of the values, as given in Theorem \ref{fastconv-thm}, may fail to be satisfied. A fortiori, without making additional assumption on the perturbation term, we also loose the fast convergence property in the perturbed case (take $g=0$!).
\end{remark}

\section{From continuous to discrete dynamics and algorithms}

Time discretization of  dissipative gradient-based dynamical systems leads naturally  to algorithms, which, under appropriate assumptions, have similar convergence properties.
This  approach has been followed successfully in a variety of situations. For a general abstract discussion see \cite {Alv_Pey2}, \cite{Alv_Pey3}, and  in the case or dynamics with inertial features see \cite{Al}, \cite{AA1}, \cite{aabr}, \cite{APR}, \cite{APR1}.
To cover  practical situations involving constraints and/or nonsmooth data, we  need to broaden our scope. 
This leads us to consider the non-smooth structured convex minimization problem
\begin{equation}\label{algo1}
\min \left\lbrace  \Phi (x) + \Psi (x): \ x \in \mathcal H   \right\rbrace   
\end{equation}
where

$\bullet$ $\Phi: \mathcal H \to  \mathbb R \cup \lbrace   + \infty  \rbrace  $ is a convex lower semicontinuous proper function  (which possibly takes the value $+ \infty$);
\vspace{1mm}

$\bullet$ $\Psi: \mathcal H \to  \mathbb R  $ is a convex continuously differentiable function, whose gradient is Lipschitz continuous. 

\medskip

\noindent The optimal solutions of (\ref{algo1}) satisfy
$$
\partial \Phi (x) + \nabla \Psi (x) \ni 0,
$$
where $\partial \Phi$ is the subdifferential of $\Phi$ in the sense of convex analysis.
In order to adapt our dynamic to this non-smooth situation, we will consider the corresponding differential inclusion
\begin{equation}\label{algo2}
\ddot{x}(t) + \frac{\alpha}{t} \dot{x}(t)  + \partial \Phi (x(t)) + \nabla \Psi (x(t)) \ni g(t).
\end{equation}
This dynamic is within the following framework
\begin{equation}\label{algo2b}
\ddot{x}(t) + a(t) \dot{x}(t)  + \partial \Theta (x(t)) \ni g(t),
\end{equation}
where $\Theta: \mathcal H \to  \mathbb R \cup \lbrace   + \infty  \rbrace  $  is a convex lower semicontinuous proper function,  and $a(\cdot)$ is a positive damping parameter.

The detailed study of this  differential inclusion goes far beyond the scope of the present article, see \cite{ACR} for some results in the case of a fixed positive damping parameter, i.e., $a(t)= \gamma >0$ fixed, and  $g=0$.
A formal analysis of this sytem shows that the Lyapunov analysis, which has been developed in the previous sections, still holds, as long as one does not use the Lipschitz continuity property of the gradient (cocoercivity property). This is based on the fact that the convexity (subdifferential) inequalites are still valid, as well as the (generalized) derivation chain rule, see \cite{Bre1}.
Thus,
setting $\Theta (x)= \Phi (x) + \Psi (x)$,  we can reasonably assume that, for $\alpha >3$, and $\int_{t_0}^{+\infty} t \|g(t)\| dt < + \infty$, for each trajectory of  (\ref{algo2}), there is rapid convergence  of the values,
\begin{align*}
  \Theta(x(t))-  \min\Theta \leq \frac{C}{t^2} ,
\end{align*}
 and weak convergence of the trajectory to an optimal solution.

Indeed, we are  going to use these ideas  as a guideline, and so introduce corresponding fast converging algorithms, making the link with Nesterov \cite{Nest1}-\cite{Nest4}, Beck-Teboulle \cite{BT}, and so extending the recent works of Chambolle-Dossal \cite{CD},  Su-Boyd-Cand\`es
\cite{SBC}, Attouch-Peypouquet-Redont \cite{APR} to the perturbed case.
As a basic ingredient of the discretization procedure, in order to preserve the fast convergence properties of the dynamical system (\ref{algo2}), we  are going to discretize it \textit{implicitely} with respect to the nonsmooth function $\Phi$, and \textit{explicitely} with respect to the smooth function $\Psi$.

Taking a fixed time step size $h>0$, and setting $t_k= kh$, $x_k = x(t_k)$ the implicit/explicit finite difference scheme for (\ref{algo2}) gives
\begin{equation}\label{algo3}
\frac{1}{h^2}(x_{k+1} -2 x_{k}   + x_{k-1} ) +\frac{\alpha}{kh^2}( x_{k}  - x_{k-1})  + \partial \Phi (x_{k+1}  ) + \nabla \Psi (y_k) \ni g_k,
\end{equation}
where  $y_k$ is a linear combination of $x_k$ and $x_{k-1}$, that will be made precise further.
After developing (\ref{algo3}), we obtain 
\begin{equation}\label{algo4}
x_{k+1} + h^2 \partial \Phi (x_{k+1}) \ni  x_{k} + \left( 1- \frac{\alpha}{k}\right) ( x_{k}  - x_{k-1}) - h^2 \nabla \Psi (y_k) + h^2 g_k .
\end{equation}
A natural choice for $y_k$ leading to a simple formulation of the algorithm (other choices are possible, offering new directions of research  for the future) is
\begin{equation}\label{algo5}
y_k=   x_{k} + \left( 1- \frac{\alpha}{k}\right) ( x_{k}  - x_{k-1}).
\end{equation}
Using the classical proximal operator (equivalently, the  resolvent operator of the maximal monotone operator $\partial \Phi$)
\begin{equation}\label{algo6}
\mbox{prox}_{ \gamma \Phi } (x)= {\argmin}_{\xi \in  \mathcal H} \left\lbrace \Phi (\xi) + \frac{1}{2 \gamma} \| \xi -x  \|^2
\right\rbrace  = \left(I + \gamma \partial \Phi \right)^{-1} (x)
\end{equation}
and setting $s=h^2$, the algorithm can be written as
\begin{equation}\label{algo7}
 \left\{
\begin{array}{l}
y_k=   x_{k} + \left( 1- \frac{\alpha}{k}\right) ( x_{k}  - x_{k-1});  	   \\
\rule{0pt}{20pt}
 x_{k+1} = \mbox{prox}_{s \Phi} \left( y_k- s (\nabla \Psi (y_k) -g_k) \right). 
 \end{array}\right.
\end{equation}
For practical purpose, and in order to fit with the existing litterature on the subject, it is convenient to work with the following equivalent formulation
\begin{equation}\label{algo7b}
 {\rm \mbox{(AVD)}_{\alpha,g}-algo}  \ \left\{
\begin{array}{l}
y_k=   x_{k} + \frac{k -1}{k  + \alpha -1} ( x_{k}  - x_{k-1});  	   \\
\rule{0pt}{20pt}
 x_{k+1} = \mbox{prox}_{s \Phi} \left( y_k- s (\nabla \Psi (y_k) -g_k)\right). 
 \end{array}\right.
\end{equation}
Indeed, we have $\frac{k -1}{k  + \alpha -1} = 1- \frac{\alpha}{k  + \alpha -1}$. When $\alpha$ is an integer, up to the reindexation 
$k\mapsto k  + \alpha -1$, we obtain the same sequences $(x_k)$ and $(y_k)$. For general $\alpha >0$, we can easily verify that the algorithm
$ {\rm \mbox{(AVD)}_{\alpha,g}-algo} $ is still associated with the dynamical system  (\ref{algo2}).

This algorithm is within the scope of the  proximal-based inertial algorithms \cite{AA1}, \cite{MO}, \cite{LP}, and forward-backward methods. In the unperturbed case, $g_k =0$,
it has been recently considered by Chambolle-Dossal \cite{CD},  Su-Boyd-Cand\`es \cite{SBC}, and Attouch-Peypouquet-Redont \cite{APR}. It enjoys fast convergence properties which are very similar to that of the continuous dynamic. 

 For $\alpha = 3$, $g_k =0$, we recover the classical algorithm based on Nesterov and G\"uler ideas, and developed by Beck-Teboulle (FISTA)
\begin{equation}\label{algo7c}
 \ \left\{
\begin{array}{l}
y_k=   x_{k} + \frac{k -1}{k  + 2} ( x_{k}  - x_{k-1});  	   \\
\rule{0pt}{20pt}
 x_{k+1} = \mbox{prox}_{s \Phi} \left( y_k- s \nabla \Psi (y_k) \right). 
 \end{array}\right.
\end{equation}
%

An important question regarding the (FISTA) method, as described in (\ref{algo7c}), is the convergence of sequences $(x_k)$ and $(y_k)$. Indeed, it is still an open question.
A major interest to consider the  broader context of $ {\rm \mbox{(AVD)}_{\alpha,g}-algo}$ algorithms is that, for $\alpha >3$, these sequences  converge, and they allow errors/perturbations, and using approximation methods. We will see that the proof of the convergence properties of $ {\rm \mbox{(AVD)}_{\alpha,g}-algo}$ algorithms  can be obtained  in a parallel  way with the convergence analysis in the continuous case in Theorem \ref{Thm-weak-conv}.

\begin{Theorem} \label{Thm-algo} \
Let  $\Phi: \mathcal H \to  \mathbb R \cup \lbrace   + \infty  \rbrace  $ be a convex lower semicontinuous proper function, and $\Psi: \mathcal H \to  \mathbb R  $  a convex continuously differentiable function, whose gradient is $L$-Lipschitz continuous.
Suppose  that   $S=\argmin (\Phi + \Psi)$ is nonempty.
Suppose that $\alpha \geq 3$, \ $ 0< s < \frac{1}{L} $, and \ $ \sum_{k \in \mathbb N}  k \|g_k\| < + \infty  $. 
Let $(x_k)$ be a sequence generated by  the algorithm {\rm${\rm \mbox{(AVD)}_{\alpha,g}-algo} $}.
Then,
 $$(\Phi + \Psi)(x_k)-  \min(\Phi + \Psi) =\mathcal O (\frac{1}{k^2}).$$
  Precisely,
\begin{equation}\label{algo8}
  (\Phi + \Psi)(x_k)-  \min(\Phi + \Psi) \leq  \frac{C (\alpha -1)}{2s\left( k + \alpha -2\right)^2 },
\end{equation}
with $C$  given by
$$
C =  \mathcal G (0) + 2s\left( \sum_{j=0}^{\infty} \left( j + \alpha -1\right)    \| g_j\|\right)  \left( \sqrt{\frac{\mathcal E (0)}{\alpha -1}} + \frac{2s}{\alpha -1} \sum_{j=0}^{\infty} \left( j + \alpha -1\right)   \| g_j\| \right) ,
$$
where
$$
\mathcal G (0) = \frac{2s}{\alpha -1} \left(  \alpha -2\right)^2    (\Theta (x_0) - {\Theta}^*) + (\alpha -1)
 \| y_0 - x^{*} \|^2 .
$$
%
%
\end{Theorem}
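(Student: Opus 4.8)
The plan is to transcribe, step by step, the Lyapunov argument behind Theorem~\ref{fastconv-thm} into discrete form, with the energy function $\mathcal E_{\alpha,g,T}$ replaced by a Lyapunov \emph{sequence} and integration by parts / the derivation chain rule replaced by summation by parts. Write $\Theta=\Phi+\Psi$, $\Theta^{*}=\min\Theta$ and $\delta_k=\Theta(x_k)-\Theta^{*}$. \textbf{Step 1 (one-step prox inequality).} From $x_{k+1}=\mbox{prox}_{s\Phi}\bigl(y_k-s(\nabla\Psi(y_k)-g_k)\bigr)$ I would combine the subgradient inequality for $\Phi$ at $x_{k+1}$, the convexity inequality for $\Psi$ at $y_k$, the descent lemma $\Psi(x_{k+1})\le\Psi(y_k)+\langle\nabla\Psi(y_k),x_{k+1}-y_k\rangle+\frac{L}{2}\|x_{k+1}-y_k\|^{2}$, and the identity $\langle y_k-x_{k+1},x_{k+1}-z\rangle=\frac12\bigl(\|y_k-z\|^{2}-\|x_{k+1}-z\|^{2}-\|y_k-x_{k+1}\|^{2}\bigr)$. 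Since $0<s<1/L$, the term $\bigl(\frac{L}{2}-\frac{1}{2s}\bigr)\|y_k-x_{k+1}\|^{2}$ is $\le0$ and is discarded, which leaves, for every $z\in\mathcal H$,
\begin{equation*}
s\bigl(\Theta(x_{k+1})-\Theta(z)\bigr)\ \le\ \tfrac12\|y_k-z\|^{2}-\tfrac12\|x_{k+1}-z\|^{2}+s\langle g_k,x_{k+1}-z\rangle .
\end{equation*}
This is the discrete substitute for the convexity inequality in the continuous proof, and it is the only place where $s<1/L$ is used.

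\textbf{Step 2 (the discrete Lyapunov recursion).} Apply Step~1 with $z=x_k$ and with $z=x^{*}$, multiply the first instance by a weight $\lambda_k$ linear in $k$ (the discrete counterpart of the factor $t$), add, and substitute $y_k=x_k+\frac{k-1}{k+\alpha-1}(x_k-x_{k-1})$ to rewrite $\|y_k-z\|^{2}$ through $x_k$ and $x_{k-1}$. With $\lambda_k$ and the auxiliary point $z_k$ — the discrete analogue of $x(t)-x^{*}+\frac{t}{\alpha-1}\dot x(t)$, normalized so that $z_0=y_0$ — chosen so that the quadratic terms telescope, one obtains a recursion of the form
\begin{equation*}
\mathcal G_{k+1}+2\,\tfrac{\alpha-3}{\alpha-1}\,s\,(k+\mbox{const})\,\delta_{k+1}\ \le\ \mathcal G_k+2s\,(k+\alpha-1)\langle z_k-x^{*},g_k\rangle+(\mbox{nonpositive remainder}),
\end{equation*}
where $\mathcal G_k=\frac{2s}{\alpha-1}(k+\alpha-2)^{2}\delta_k+(\alpha-1)\|z_k-x^{*}\|^{2}$, and the remainder collects the squared-difference terms created by the finite-difference scheme. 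Checking that this remainder has the favorable sign, together with the positivity of $\frac{\alpha-3}{\alpha-1}$, is exactly where $\alpha\ge3$ enters; this is the discrete counterpart of (\ref{basic-energy-Liap}). Introducing the corrected sequence $\mathcal E_k:=\mathcal G_k+2s\sum_{j\ge k}(j+\alpha-1)\langle z_j-x^{*},g_j\rangle$ — the discrete analogue of the Lyapunov function $\mathcal E_{\alpha,g}$ — the recursion becomes $\mathcal E_{k+1}\le\mathcal E_k$ for $\alpha\ge3$.

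\textbf{Steps 3--4 (Gronwall and conclusion).} The tail defining $\mathcal E_k$ is summable since $\sum_j j\|g_j\|<\infty$ and, a posteriori, $\sup_j\|z_j-x^{*}\|<\infty$. From $\mathcal E_k\le\mathcal E_0=\mathcal E(0)$, dropping the nonnegative $\delta_k$-part and the tail, one gets $(\alpha-1)\|z_k-x^{*}\|^{2}\le\mathcal E(0)+2s\bigl(\sup_{j\le k}\|z_j-x^{*}\|\bigr)\sum_{j=0}^{\infty}(j+\alpha-1)\|g_j\|$, and a discrete version of the Gronwall--Bellman Lemma~\ref{GB-lemma} applied to $w_k=\|z_k-x^{*}\|$ yields
\begin{equation*}
\sup_k\|z_k-x^{*}\|\ \le\ \sqrt{\tfrac{\mathcal E(0)}{\alpha-1}}+\tfrac{2s}{\alpha-1}\sum_{j=0}^{\infty}(j+\alpha-1)\|g_j\|\ <\ +\infty .
\end{equation*}
Summing the recursion of Step~2 (equivalently, using $\mathcal G_k\le\mathcal G(0)+2s\bigl(\sup_{j\le k}\|z_j-x^{*}\|\bigr)\sum_j(j+\alpha-1)\|g_j\|$) and keeping only the term $\frac{2s}{\alpha-1}(k+\alpha-2)^{2}\delta_k$ of $\mathcal G_k$, one bounds $\frac{2s}{\alpha-1}(k+\alpha-2)^{2}\delta_k$ by the constant $C$ of the statement, which rearranges to (\ref{algo8}).

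The main obstacle is Step~2: identifying the correct weight $\lambda_k$ and auxiliary point $z_k$ so that the quadratic terms telescope \emph{exactly}, and then bookkeeping the extra squared-difference terms produced by the discretization carefully enough to see that, precisely when $\alpha\ge3$ and $0<s<1/L$, they assemble into a nonpositive remainder — this is the discrete shadow of the clean cancellation leading from (\ref{rfastoche1}) to (\ref{basic-energy-Liap}) in the continuous case. Steps~1, 3 and 4 are then fairly mechanical transcriptions of the continuous proof with sums in place of integrals.
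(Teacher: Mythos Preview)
Your proposal is correct and follows essentially the same route as the paper: the same one-step prox inequality (the paper writes it via the operator $G_{s,k}(y)=\frac{1}{s}(y-\mbox{prox}_{s\Phi}(y-s\nabla\Psi_k(y)))$, but the two forms are algebraically equivalent), the same convex combination at $z=x_k$ and $z=x^{*}$ with weights $\frac{k}{k+\alpha-1}$ and $\frac{\alpha-1}{k+\alpha-1}$, the same Lyapunov sequence $\mathcal G_k$, and the same discrete Gronwall step. Two small corrections to your sketch: in the paper's recursion the perturbation term carries $z_{k+1}$ rather than $z_k$ (it arises from $\langle g_k,x_{k+1}-z\rangle$ after combination, and $z_{k+1}=\frac{k+\alpha-1}{\alpha-1}x_{k+1}-\frac{k}{\alpha-1}x_k$), and $\alpha\ge3$ enters not through a sign check on a leftover discretization term but through the elementary inequality $k(k+\alpha-1)\le(k+\alpha-2)^2$, which is what turns the coefficient of $\delta_k$ into $(k+\alpha-2)^2$ and produces the extra $2s\frac{\alpha-3}{\alpha-1}k\,\delta_k$ term.
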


\begin{proof} 
To simplify notations, we set $\Theta = \Phi + \Psi$, and take $x^{*} \in \argmin \Theta$, i.e., $\Theta (x^{*})= \inf \Theta  $.
In a parallel way  to the continuous case, our proof is based on proving that $(\mathcal E (k))$ is a non-increasing sequence, where $\mathcal E (k)$  is the discrete version of 
the Lyapunov function ${\mathcal E}_{\alpha,g} (t)$ (we shall justify further that it is well defined), and which is given by
\begin{equation}\label{algo9b}
\mathcal E (k):= \frac{2s}{\alpha -1} \left( k + \alpha -2\right)^2    (\Theta (x_k) - \Theta(x^{*}) + (\alpha -1)
 \| z_k -x^{*} \|^2  + \sum_{j=k}^{\infty} 2s\left( j + \alpha -1\right)    \left\langle g_j, z_{j+1}- x^{*}  \right\rangle ,
\end{equation}
with
\begin{equation}\label{algo9c}
z_k := \frac{k + \alpha -1}{\alpha -1}y_k - \frac{k}{\alpha -1}x_k .
\end{equation}
In the passage from the continuous to discrete, we recall that we must use the reindexing
$k \mapsto k + \alpha -1$. Note that $\mathcal E (k)$ is equal to the Lyapunov function considered by Su-Boyd-Cand\`es in 
\cite[Theorem 4.3]{SBC}, plus a perturbation term.\\
 Let us introduce the function $\Psi_k : \mathcal H \rightarrow \mathbb R $ which is defined by
$$
\forall y \in \mathcal H, \ \Psi_k (y):= \Psi (y) - \left\langle g_k, y \right\rangle .
$$
We also set
$$
\Theta_k = \Phi + \Psi_k .
$$
We have  $\nabla \Psi_k (y)= \nabla\Psi (y) -g_k$, and hence $\nabla \Psi_k$ is still $L$-Lipschitz continuous.
We can reformulate our algorithm with the help of $\Psi_k $ as follows
\begin{equation}\label{algo12}
 {\rm \mbox{(AVD)}_{\alpha,g}-algo}  \ \left\{
\begin{array}{l}
y_k=   x_{k} + \frac{k -1}{k  + \alpha -1} ( x_{k}  - x_{k-1});  	   \\
\rule{0pt}{20pt}
 x_{k+1} = \mbox{prox}_{s \Phi} \left( y_k- s \nabla \Psi_k (y_k)\right). 
 \end{array}\right.
\end{equation}
In order to analyze the convergence properties of the above algorithm, it is convenient to introduce the 
operator $G_{s,k}: \mathcal H \rightarrow \mathcal H$ which is defined by, for all $y \in \mathcal H$,
$$
G_{s,k} (y) = \frac{1}{s}\left( y - \mbox{prox}_{s \Phi} \left( y- s \nabla \Psi_k (y)  \right) \right) .
$$
Equivalently,
$$
\mbox{prox}_{s \Phi} \left( y- s \nabla \Psi_k (y)  \right)  = y - s G_{s,k} (y),
$$
and the  algorithm (\ref{algo12}) can be formulated as
\begin{equation}\label{algo13}
 {\rm \mbox{(AVD)}_{\alpha,g}-algo}  \ \left\{
\begin{array}{l}
y_k=   x_{k} + \frac{k -1}{k  + \alpha -1} ( x_{k}  - x_{k-1});  	   \\
\rule{0pt}{20pt}
 x_{k+1} = y_k - s G_{s,k} (y_k).
  \end{array}\right.
\end{equation}
The variable $z_k$, which is defined in (\ref{algo9c}) by \ $z_k = \frac{k + \alpha -1}{\alpha -1}y_k - \frac{k}{\alpha -1}x_k$,
will play an important role. It comes naturally into play as a discrete version of the term 
$  \frac{t}{\alpha-1}  \dot{x}(t) + x(t) - x^{*} $
which enters  ${\mathcal E}_{\alpha,g}(t)$. Indeed,
\begin{align}\label{algo13-b}
\frac{k + \alpha -1}{\alpha -1} \left(  x_{k+1}- x_{k} \right) + x_{k}& = \frac{k + \alpha -1}{\alpha -1} x_{k+1}
-\frac{k }{\alpha -1} x_k \\
 & = z_{k+1} \nonumber
\end{align}
where the last equality comes from
(\ref{algo14b}) below.
Let us examine the recursive relation satisfied by $z_k$.
We have
\begin{align}
z_{k+1}&= \frac{k + \alpha}{\alpha -1}y_{k+1} - \frac{k+1}{\alpha -1}x_{k+1} \nonumber \\
&= \frac{k + \alpha}{\alpha -1}\left(  x_{k+1} + \frac{k }{k  + \alpha } ( x_{k+1}  - x_{k})            \right)  - \frac{k+1}{\alpha -1}x_{k+1} \nonumber\\
&= \frac{k + \alpha -1}{\alpha -1}  x_{k+1}         
-\frac{k}{\alpha -1}x_{k} \label{algo14b}\\
&= \frac{k + \alpha -1}{\alpha -1}   \left(y_k - s G_{s,k} (y_k) \right)  -\frac{k}{\alpha -1}x_{k} \nonumber\\
&= z_k -\frac{s}{\alpha -1} \left( k + \alpha -1\right)  G_{s,k} (y_k) \label{algo14a}.
\end{align}
We now use the classical formula in the proximal gradient 
(also called forward-backward) analysis (see \cite{BT}, \cite{CD}, \cite{PB}, \cite{SBC}): for any $x, y\in \mathcal H$
\begin{equation}\label{algo14}
 \Theta_k (y - sG_{s,k} (y)) \leq \Theta_k (x) + \left\langle  G_{s,k} (y), y-x \right\rangle -\frac{s}{2} \|  G_{s,k} (y) \|^2 .
\end{equation}
Note that this formula is valid since $s \leq \frac{1}{L}$, and $\nabla \Psi_k$ is $L$-lipschitz continuous.
Let us write successively this formula at $y=y_k$ and $x= x_k$, then at 
$y=y_k$ and $x= x^{*}$. We obtain 
\begin{align*}
 &\Theta_k (y_k - sG_{s,k} (y_k)) \leq \Theta_k (x_k) + \left\langle  G_{s,k} (y_k), y_k-x_k \right\rangle -\frac{s}{2} \|  G_{s,k} (y_k) \|^2 \\
& \Theta_k (y_k - sG_{s,k} (y_k)) \leq \Theta_k (x^{*}) + \left\langle  G_{s,k} (y_k), y_k-x^{*} \right\rangle -\frac{s}{2} \|  G_{s,k} (y_k) \|^2 .
\end{align*}
Multiplying the first equation by $\frac{k}{k + \alpha -1}$, and the second by $\frac{\alpha -1}{k + \alpha -1}$, then adding the two resulting equations, and using $x_{k+1} = y_k - s G_{s,k} (y_k)$, we obtain
\begin{align}\label{algo15}
 \Theta_k (x_{k+1}) \leq &\frac{k}{k + \alpha -1} \Theta_k (x_k) + \frac{\alpha -1}{k + \alpha -1}\Theta_k (x^{*}) 
 -\frac{s}{2} \|  G_{s,k} (y_k) \|^2 \\
 & + \left\langle  G_{s,k} (y_k), \frac{k}{k + \alpha -1}(y_k-x_k ) + \frac{\alpha -1}{k + \alpha -1} (y_k-x^{*})\right\rangle \label{algo15b}.
\end{align}
Let us rewrite the  scalar product in  (\ref{algo15b})    as follows:
\begin{align}\label{algo16}
 \left\langle  G_{s,k} (y_k), \frac{k}{k + \alpha -1}(y_k-x_k ) + \frac{\alpha -1}{k + \alpha -1} (y_k-x^{*})\right\rangle &=  \frac{\alpha -1}{k + \alpha -1} 
 \left\langle  G_{s,k} (y_k), \frac{k}{\alpha -1}(y_k-x_k ) + y_k-x^{*}\right\rangle   \\
& =\frac{\alpha -1}{k + \alpha -1} 
 \left\langle  G_{s,k} (y_k), \frac{k+\alpha -1}{\alpha -1}y_k- \frac{k}{\alpha -1} x_k -x^{*}\right\rangle \nonumber\\
& =\frac{\alpha -1}{k + \alpha -1} 
 \left\langle  G_{s,k} (y_k), z_k -x^{*}\right\rangle  \nonumber.
\end{align}
Combining (\ref{algo15})-(\ref{algo15b})     with (\ref{algo16}), we obtain
\begin{align}\label{algo17}
 \Theta_k (x_{k+1}) \leq \frac{k}{k + \alpha -1} \Theta_k (x_k) + \frac{\alpha -1}{k + \alpha -1}\Theta_k (x^{*}) +
 \frac{\alpha -1}{k + \alpha -1} 
 \left\langle  G_{s,k} (y_k), z_k -x^{*}\right\rangle -\frac{s}{2} \|  G_{s,k} (y_k) \|^2 .
\end{align}
In order to write (\ref{algo17}) in a recursive form, we use the  relation (\ref{algo14a}) satisfied by $z_k$, which gives
$$
z_{k+1} -x^{*} = z_k  -x^{*} -\frac{s}{\alpha -1} \left( k + \alpha -1\right)  G_{s,k} (y_k) .
$$   
After developing
$$
\| z_{k+1} -x^{*} \|^2 = \| z_{k} -x^{*} \|^2 
-2\frac{s}{\alpha -1} \left( k + \alpha -1\right) 
\left\langle   z_{k} -x^{*}, G_{s,k} (y_k)                     \right\rangle  + \frac{s^2}{(\alpha -1)^2} \left( k + \alpha -1\right)^2  \|  G_{s,k} (y_k)  \|^2 ,
$$
and multiplying the above expression by 
$\frac{(\alpha -1)^2} {2s\left( k + \alpha -1\right)^2}$,
we obtain
$$
\frac{(\alpha -1)^2} {2s\left( k + \alpha -1\right)^2}
\left( \| z_{k} -x^{*} \|^2 -\| z_{k+1} -x^{*} \|^2 \right)  = \frac{\alpha -1}{k + \alpha -1} 
 \left\langle  G_{s,k} (y_k), z_k -x^{*}\right\rangle -\frac{s}{2} \|  G_{s,k} (y_k) \|^2 .
$$
Replacing this expression in (\ref{algo17}), we obtain
\begin{align}\label{algo18}
 \Theta_k (x_{k+1}) \leq \frac{k}{k + \alpha -1} \Theta_k (x_k) + \frac{\alpha -1}{k + \alpha -1}\Theta_k (x^{*}) +
 \frac{(\alpha -1)^2} {2s\left( k + \alpha -1\right)^2}
\left( \| z_{k} -x^{*} \|^2 -\| z_{k+1} -x^{*} \|^2 \right) .
\end{align}
Equivalently
\begin{align}\label{algo19}
 \Theta_k (x_{k+1})-\Theta_k (x^{*}) \leq \frac{k}{k + \alpha -1} \left( \Theta_k (x_k) -\Theta_k (x^{*}) \right) + \frac{(\alpha -1)^2} {2s\left( k + \alpha -1\right)^2}
\left( \| z_{k} -x^{*} \|^2 -\| z_{k+1} -x^{*} \|^2 \right) .
\end{align}
Returning to $\Theta (y)= \Theta_k (y) + \left\langle g_k, y \right\rangle$, we obtain
\begin{align}\label{algo19b}
 \Theta(x_{k+1})-\Theta (x^{*}) \leq &\frac{k}{k + \alpha -1} \left( \Theta (x_k) -\Theta(x^{*}) \right) + \frac{(\alpha -1)^2} {2s\left( k + \alpha -1\right)^2}
\left( \| z_{k} -x^{*} \|^2 -\| z_{k+1} -x^{*} \|^2 \right) \\
& +       \left\langle g_k, x_{k+1}- x^{*} \right\rangle
 - \frac{k}{k + \alpha -1} \left\langle g_k, x_{k}- x^{*} \right\rangle    \nonumber.
\end{align}
After reduction
\begin{align}\label{algo20}
 \Theta(x_{k+1})-\Theta (x^{*}) \leq &\frac{k}{k + \alpha -1} \left( \Theta (x_k) -\Theta(x^{*}) \right) + \frac{(\alpha -1)^2} {2s\left( k + \alpha -1\right)^2}
\left( \| z_{k} -x^{*} \|^2 -\| z_{k+1} -x^{*} \|^2 \right) \\&+       \left\langle g_k, x_{k+1}- x_{k} 
+ \frac{\alpha -1}{k + \alpha -1}(x_{k}- x^{*}  )\right\rangle
    \nonumber.
\end{align}
Multiplying by $ \frac{2s}{\alpha -1}\left( k + \alpha -1\right)^2    $, we obtain
\begin{align}\label{algo21}
\frac{2s}{\alpha -1}\left( k + \alpha -1\right)^2    \left( \Theta(x_{k+1})-\Theta (x^{*})\right)  \leq &\frac{2s}{\alpha -1} k\left( k + \alpha -1\right) \left( \Theta (x_k) -\Theta(x^{*}) \right) + (\alpha -1)
\left( \| z_{k} -x^{*} \|^2 -\| z_{k+1} -x^{*} \|^2 \right) \\&+   \frac{2s}{\alpha -1}\left( k + \alpha -1\right)^2       \left\langle g_k, x_{k+1}- x_{k} 
+ \frac{\alpha -1}{k + \alpha -1}(x_{k}- x^{*}  )\right\rangle 
    \nonumber.
\end{align}
For $\alpha \geq 3$ one can easily verify that 
$$
k\left( k + \alpha -1\right) \leq \left( k + \alpha -2\right)^2   .
$$
More precisely
$$
k\left( k + \alpha -1\right) =\left( k + \alpha -2\right)^2 -k(\alpha -3) -(\alpha -2)^2 \leq \left( k + \alpha -2\right)^2 -k(\alpha -3).
$$
As a consequence, from (\ref{algo21})   we deduce that 
\begin{align}\label{algo22}
\frac{2s}{\alpha -1}&\left( k + \alpha -1\right)^2    \left( \Theta(x_{k+1})-\Theta (x^{*})\right) +  2s \frac{\alpha -3}{\alpha -1}k \left( \Theta (x_k) -\Theta(x^{*}) \right)  \leq \frac{2s}{\alpha -1} \left( k + \alpha -2\right)^2 \left( \Theta (x_k) -\Theta(x^{*}) \right) \\& +(\alpha -1)
\left( \| z_{k} -x^{*} \|^2 -\| z_{k+1} -x^{*} \|^2 \right) +   \frac{2s}{\alpha -1}\left( k + \alpha -1\right)^2       \left\langle g_k, x_{k+1}- x_{k} 
+ \frac{\alpha -1}{k + \alpha -1}(x_{k}- x^{*}  )\right\rangle 
    \nonumber.
\end{align}
Setting
\begin{equation}\label{algo23}
   \mathcal G(k)= \frac{2s}{\alpha -1} \left( k + \alpha -2\right)^2    (\Theta (x_k) - {\Theta}^*) + (\alpha -1)
 \| z_{k} -x^{*} \|^2  ,
\end{equation}
we can reformulate (\ref{algo22}) as
\begin{equation}\label{algo28}
   \mathcal G (k+1) +  2s \frac{\alpha -3}{\alpha -1}k \left( \Theta (x_k) -\Theta(x^{*}) \right) \leq  \mathcal G (k) + \frac{2s}{\alpha -1}\left( k + \alpha -1\right)^2    \left\langle g_k, x_{k+1}- x_{k} 
+ \frac{\alpha -1}{k + \alpha -1}(x_{k}- x^{*}  )\right\rangle.
\end{equation}
Equivalently
\begin{equation}\label{algo29}
   \mathcal G (k+1) +  2s \frac{\alpha -3}{\alpha -1}k \left( \Theta (x_k) -\Theta(x^{*}) \right) \leq  \mathcal G (k) + 2s\left( k + \alpha -1\right)    \left\langle g_k, \frac{k + \alpha -1}{\alpha -1} \left(  x_{k+1}- x_{k} \right) + x_{k}- x^{*}  \right\rangle.
\end{equation}
Using (\ref{algo13-b})
\begin{align*}
 z_{k+1} = \frac{k + \alpha -1}{\alpha -1} \left(  x_{k+1}- x_{k} \right) + x_{k},
\end{align*}
we deduce that
\begin{equation}\label{algo30}
   \mathcal G (k+1) +  2s \frac{\alpha -3}{\alpha -1}k \left( \Theta (x_k) -\Theta(x^{*}) \right) \leq  \mathcal G (k) + 2s\left( k + \alpha -1\right)    \left\langle g_k, z_{k+1}- x^{*}  \right\rangle.
\end{equation}
We now develop a similar analysis as in the continuous case. Given some integer $K$, set 
$$
\mathcal E_K (k)= \mathcal G (k) + \sum_{j=k}^K 2s\left( j + \alpha -1\right)    \left\langle g_j, z_{j+1}- x^{*}  \right\rangle .
$$
Then (\ref{algo30}) is equivalent to
\begin{equation}\label{algo31}
   \mathcal E_K (k+1) +  2s \frac{\alpha -3}{\alpha -1}k \left( \Theta (x_k) -\Theta(x^{*}) \right) \leq  \mathcal E_K (k) .
\end{equation}
Hence, the sequence $(\mathcal E_K (k))$ is nonincreasing.
In particular $\mathcal E_K (k) \leq \mathcal E_K (0)$, which gives
$$
\mathcal G (k) + \sum_{j=k}^K 2s\left( j + \alpha -1\right)    \left\langle g_j, z_{j+1}- x^{*}  \right\rangle \leq \mathcal G (0) + \sum_{j=0}^K 2s\left( j + \alpha -1\right)    \left\langle g_j, z_{j+1}- x^{*}  \right\rangle .
$$
As a consequence
\begin{equation}\label{algo31b}
\mathcal G (k) \leq \mathcal G (0) + \sum_{j=0}^{k-1} 2s\left( j + \alpha -1\right)    \left\langle g_j, z_{j+1}- x^{*}  \right\rangle .
\end{equation}
By definition of $\mathcal G (k)$, neglecting some positive terms, and by Cauchy-Schwarz inequality, we infer
$$
(\alpha -1)
 \| z_{k} -x^{*} \|^2  \leq \mathcal G (0) + 2s \sum_{j=0}^{k-1} \left( j + \alpha -1\right)   \| g_j\| \|z_{j+1}- x^{*} \| .
$$
Equivalently
\begin{equation}\label{algo32}
 \| z_{k} -x^{*} \|^2  \leq \frac{1}{\alpha -1}\mathcal G (0) + \frac{2s}{\alpha -1} \sum_{j=1}^{k} \left( j + \alpha -2\right)   \| g_{j-1}\| \|z_{j}- x^{*} \| .
\end{equation}
We then use the following result, a discrete version of Gronwall's lemma.
\begin{lemma}\label{d-Gronwall} Let $(a_k)$ be a sequence of positive real numbers such that
$$
a_k^2 \leq c + \sum_{j=1}^k \beta_j  a_j
$$
where $(\beta_j )$ is a sequence of positive real numbers such that $\sum_j \beta_j <+ \infty$, and $c$ is a positive real number. Then
$$
a_k \leq \sqrt{c} + \sum_{j=1}^{\infty} \beta_j .
$$
\end{lemma}
\begin{proof}
Set $A_k := \sup_{1\leq j \leq  k} a_j $.
Then, for $1\leq l \leq k$
$$
a_l^2 \leq c + \sum_{j=1}^l \beta_j  a_j \leq c + 
A_k \sum_{j=1}^{\infty} \beta_j  
$$
Passing to the supremum with respect to $l$, with $1\leq l \leq k$, we obtain
$$
A_k^2 \leq c + 
A_k \sum_{j=1}^{\infty} \beta_j  .
$$
By elementary algebraic computation, it follows that
$$
A_k \leq \sqrt{c} + \sum_{j=1}^{\infty} \beta_j .
$$
\end{proof}
\textit{Following the proof of Theorem \ref{Thm-algo}.}
From (\ref{algo32}), applying Lemma \ref{d-Gronwall} with 
$a_k = \| z_{k} -x^{*} \|$, we deduce that
\begin{equation}\label{algo33}
 \| z_{k} -x^{*} \|  \leq M:= \sqrt{\frac{\mathcal G (0)}{\alpha -1}} + \frac{2s}{\alpha -1} \sum_{j=0}^{\infty} \left( j + \alpha -1\right)   \| g_{j}\|  .
\end{equation}
Note that $M$ is finite, because of the assumption 
$ \sum_{k \in \mathbb N}  k \|g_k \| < + \infty  $.
Returning to (\ref{algo31b}) we obtain
\begin{equation}\label{algo34}
\mathcal G (k) \leq C:= \mathcal G (0) + 2s\left( \sum_{j=0}^{\infty} \left( j + \alpha -1\right)    \| g_j\|\right)  \left( \sqrt{\frac{\mathcal G (0)}{\alpha -1}} + \frac{2s}{\alpha -1} \sum_{j=0}^{\infty} \left( j + \alpha -1\right)   \| g_j\| \right)  .
\end{equation}
By definition of $\mathcal G (k)$, and the positivity of its constitutive elements we finally obtain
$$
\frac{2s}{\alpha -1} \left( k + \alpha -2\right)^2    (\Theta (x_k) - {\Theta}^*) \leq C .
$$
which gives (\ref{algo8}).
\end{proof}

\begin{remark} In  the particular case $\alpha = 3$,  for a perturbed version of the classical FISTA algorithm,   Schmidt, Le Roux, and  Bach proved in \cite{SLB} a  result similar to Theorem \ref{Thm-algo} concerning the fast convergence of the values. 
\end{remark}

Let us now study the convergence of the sequence $(x_k)$.
\begin{Theorem} \label{Thm-algo2} \
Let  $\Phi: \mathcal H \to  \mathbb R \cup \lbrace   + \infty  \rbrace  $ be a convex lower semicontinuous proper function, and $\Psi: \mathcal H \to  \mathbb R  $  a convex continuously differentiable function, whose gradient is $L$-Lipschitz continuous.
Suppose  that   $S=\argmin (\Phi + \Psi)$ is nonempty.
Suppose that $\alpha >3$, \ $ 0< s < \frac{1}{L} $, and \ $ \sum_{k \in \mathbb N}  k \|g_k\| < + \infty  $. 
Let $(x_k)$ be a sequence generated by  the algorithm {\rm${\rm \mbox{(AVD)}_{\alpha,g}-algo} $}.
Then,

\medskip
 i) \ $ \sum_k    k \Big((\Phi + \Psi)(x_k) - \inf(\Phi + \Psi) \Big)   < + \infty$;
 
 \medskip
 
 ii) \ $\sum k\|x_{k+1}-x_k\|^2<+\infty$ ;
 
 \medskip
 
  iii)\ $ (x_k ) \ \mbox{converges weakly, as} \ k\to+\infty, \ \mbox{to some} \ x^*\in\argmin\Phi$.

\end{Theorem}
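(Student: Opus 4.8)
The plan is to mimic, part by part, the continuous proof of Theorem \ref{Thm-weak-conv}, feeding in the discrete Lyapunov analysis already carried out for Theorem \ref{Thm-algo}. Write $\Theta = \Phi + \Psi$, $\Theta^* = \min\Theta$, $\delta_k = \Theta(x_k) - \Theta^* \ge 0$, $v_k = x_k - x_{k-1}$, and fix $x^* \in S$. I will use, from the proof of Theorem \ref{Thm-algo}: the one-step decrease $\mathcal E_K(k+1) + 2s\frac{\alpha-3}{\alpha-1}k\,\delta_k \le \mathcal E_K(k)$ of (\ref{algo31}); the bound $\sup_k \|z_k - x^*\| \le M < +\infty$ of (\ref{algo33}); and the identity $z_{k+1} = \frac{k+\alpha-1}{\alpha-1}(x_{k+1}-x_k) + x_k = \frac{k+\alpha-1}{\alpha-1}x_{k+1} - \frac{k}{\alpha-1}x_k$ from (\ref{algo13-b})--(\ref{algo14b}). \textbf{Item i).} Summing (\ref{algo31}) from $k=0$ to $N$ gives $2s\frac{\alpha-3}{\alpha-1}\sum_{k=0}^{N} k\,\delta_k \le \mathcal E_K(0) - \mathcal E_K(N+1)$. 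Since $\mathcal E_K(0)$ is bounded above and $\mathcal E_K(N+1) \ge \mathcal G(N+1) - 2sM\sum_{j\ge 0}(j+\alpha-1)\|g_j\|$ is bounded below, both uniformly in $N$ and $K$ (using $\mathcal G \ge 0$ and $\sum_k k\|g_k\| < +\infty$), the right-hand side is bounded; as $\alpha > 3$, letting $N\to+\infty$ yields $\sum_k k\,\delta_k < +\infty$.

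\textbf{Item ii).} First, $(x_k)$ is bounded: by (\ref{algo14b}), $x_{k+1} = \frac{\alpha-1}{k+\alpha-1}z_{k+1} + \frac{k}{k+\alpha-1}x_k$ is a convex combination, so induction gives $\|x_k - x^*\| \le \max(M,\|x_0-x^*\|)$, whence $(y_k)$ is bounded too; therefore, by (\ref{algo13-b}), $\|x_{k+1}-x_k\| = \frac{\alpha-1}{k+\alpha-1}\|z_{k+1}-x_k\| \le C/k$. Next, the forward--backward inequality (\ref{algo14}) at $y=y_k$, $x=x_k$, combined with $sG_{s,k}(y_k) = y_k - x_{k+1}$, $y_k - x_k = \frac{k-1}{k+\alpha-1}v_k$ and the identity $\langle a,b\rangle - \tfrac12\|a\|^2 = \tfrac12\|b\|^2 - \tfrac12\|a-b\|^2$, gives
\begin{equation*}
\tfrac{1}{2s}\|x_{k+1}-x_k\|^2 + \delta_{k+1} \le \delta_k + \tfrac{1}{2s}\Big(\tfrac{k-1}{k+\alpha-1}\Big)^2 \|x_k-x_{k-1}\|^2 + \langle g_k, x_{k+1}-x_k\rangle .
\end{equation*}
Multiply by $(k+\alpha-1)^2$, which exactly cancels the inertial factor since $(k+\alpha-1)^2\big(\tfrac{k-1}{k+\alpha-1}\big)^2 = (k-1)^2$, sum over $k$ and re-index. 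Because $(k+\alpha-1)^2 - k^2 = (\alpha-1)(2k+\alpha-1) \ge 2(\alpha-1)k$, the left-hand side dominates $\tfrac{\alpha-1}{s}\sum_k k\|x_{k+1}-x_k\|^2$; on the right, Abel summation with $(k+\alpha-1)^2 - (k+\alpha-2)^2 \le Ck$ and item i) controls the $\delta$-terms, while $\sum_k (k+\alpha-1)^2 \|g_k\|\,\|x_{k+1}-x_k\| \le C'\sum_k k\|g_k\| < +\infty$ by the bound $\|x_{k+1}-x_k\| \le C/k$. Hence $\sum_k k\|x_{k+1}-x_k\|^2 < +\infty$.

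\textbf{Item iii).} I apply the discrete analogue of Opial's Lemma \ref{Opial} to $(x_k)$ with $S = \argmin\Theta$. Condition $(ii)$ is immediate: by Theorem \ref{Thm-algo}, $\Theta(x_k) \to \Theta^* = \inf\Theta$, so by convexity and lower semicontinuity of $\Theta$ every weak cluster point of $(x_k)$ lies in $S$. For condition $(i)$, fix $z\in S$, set $h_k = \tfrac12\|x_k - z\|^2$ and $\theta_k = h_k - h_{k-1}$. The scheme reads $v_{k+1} - v_k + \frac{\alpha}{k+\alpha-1}v_k = -sG_{s,k}(y_k)$, and expanding $\theta_{k+1} - \theta_k + \frac{\alpha}{k+\alpha-1}\theta_k$ with this relation yields
\begin{equation*}
\theta_{k+1} - \theta_k + \frac{\alpha}{k+\alpha-1}\theta_k = \tfrac12\|v_{k+1}\|^2 + \tfrac{k-1}{2(k+\alpha-1)}\|v_k\|^2 - s\langle G_{s,k}(y_k), x_k - z\rangle .
\end{equation*}
The crucial step is to bound $-s\langle G_{s,k}(y_k), x_k - z\rangle$ from above. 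Writing $G_{s,k}(y_k) = \xi_{k+1} + \nabla\Psi(y_k) - g_k$ with $\xi_{k+1}\in\partial\Phi(x_{k+1})$, and using the subgradient inequality for $\Phi$ at $x_{k+1}$, the one for $\Psi$ at $y_k$, and the prox identity $\xi_{k+1} + \nabla\Psi(y_k) = \tfrac1s(y_k - x_{k+1}) + g_k$ — which lets the inner product against $x_k - x_{k+1}$ be expressed through the increments $v_k,v_{k+1}$ alone, so no subgradient norm enters — one gets $\langle G_{s,k}(y_k), x_k - z\rangle \ge \delta_{k+1} - \tfrac{1}{2s}\|v_k\|^2 - \langle g_k, x_{k+1} - z\rangle$. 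Since $\delta_{k+1}\ge 0$ and $(x_k)$ is bounded, this produces
\begin{equation*}
\theta_{k+1} - \theta_k + \frac{\alpha}{k+\alpha-1}\theta_k \le \ell_k := \tfrac12\|v_{k+1}\|^2 + \|v_k\|^2 + C\|g_k\|,
\end{equation*}
with $\sum_k (k+\alpha-1)\,\ell_k^{+} < +\infty$ by item ii) and $\sum_k k\|g_k\| < +\infty$. From $\theta_{k+1} \le \tfrac{k-1}{k+\alpha-1}\theta_k + \ell_k$ we get $(k+\alpha-1)\theta_{k+1}^{+} \le (k-1)\theta_k^{+} + (k+\alpha-1)\ell_k^{+}$; with $b_k := (k+\alpha-2)\theta_k^{+}$, telescoping gives $(\alpha-1)\sum_k \theta_k^{+} \le b_1 + \sum_k (k+\alpha-1)\ell_k^{+} < +\infty$. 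Since $h_k\ge 0$ and $\theta_k = h_k - h_{k-1}$, a quasi-Fej\'er argument then shows $\lim_k\|x_k - z\|$ exists, establishing $(i)$; Opial's lemma gives $x_k \rightharpoonup x^*$ for some $x^*\in S$.

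I expect item iii) to be the main obstacle: because the scheme is implicit in $\Phi$ and explicit in $\Psi$, the convexity inequalities at hand are anchored at $x_{k+1}$ and $y_k$ rather than at $x_k$, so the estimate of $\langle G_{s,k}(y_k), x_k - z\rangle$ must be organized so that this mismatch generates only terms of size $\|v_k\|^2$, $\|v_{k+1}\|^2$, $\|g_k\|$ — in particular one must invoke the prox identity to avoid an uncontrolled $\|\partial\Phi(x_{k+1})\|$ — before feeding the result into a discrete version of Lemma \ref{basic-edo}. Items i) and ii) are comparatively routine; the two points to get right are the choice of weight $(k+\alpha-1)^2$ in item ii), which cancels the inertial coefficient exactly, and the use of $\alpha > 3$ in item i) together with the $O(1/k)$ decay of $\|x_{k+1}-x_k\|$ to absorb the perturbation term.
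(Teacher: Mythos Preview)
Your proof is correct, and for items ii) and iii) it is organized differently from the paper's. For item i) the two arguments coincide. For item ii), the paper first applies a discrete Gronwall--Bellman lemma to the summed inequality (\ref{algo-conv9}) in order to obtain $\sup_k k\|x_{k+1}-x_k\|<\infty$, and only afterwards proves boundedness of $(x_k)$; you instead get boundedness of $(x_k)$ first, via the pleasant observation that $x_{k+1}=\tfrac{\alpha-1}{k+\alpha-1}z_{k+1}+\tfrac{k}{k+\alpha-1}x_k$ is a convex combination of a bounded point and $x_k$, and then read off $\|x_{k+1}-x_k\|\le C/k$ directly from (\ref{algo13-b}). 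This shortcut lets you bypass the Gronwall step entirely when controlling the perturbation sum $\sum(k+\alpha-1)^2\|g_k\|\|x_{k+1}-x_k\|$. For item iii), the paper works with the parallelogram identity, the monotonicity of $\partial\Phi$, and an explicit cocoercivity estimate for $\nabla\Psi$ (inequality (\ref{algo-conv17})) to reach an inequality of the form $h_{k+1}-h_k-\gamma_k(h_k-h_{k-1})\le\omega_k$, and then invokes Lemma \ref{diff-ineq-disc}. You instead reuse the forward--backward inequality (\ref{algo14}) with $x=z$ to bound $\langle G_{s,k}(y_k),x_k-z\rangle$ from below (so cocoercivity is never called upon explicitly---it is already baked into (\ref{algo14})), and the resulting one-step inequality $(k+\alpha-1)\theta_{k+1}^+\le(k-1)\theta_k^+ +(k+\alpha-1)\ell_k$ telescopes immediately to $\sum_k\theta_k^+<\infty$, without the combinatorial detour of Lemma \ref{diff-ineq-disc}. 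Both routes are valid; yours is a bit more economical, at the price of requiring the reader to check the identity for $\theta_{k+1}-\tfrac{k-1}{k+\alpha-1}\theta_k$ and the lower bound on $\langle G_{s,k}(y_k),x_k-z\rangle$ carefully.
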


\begin{proof} 
The demonstration is parallel to that of Theorem \ref{Thm-weak-conv}.

\medskip
\textbf{Step 1.} \ 
Let us return to (\ref{algo30}),
\begin{equation*}
   \mathcal G (k+1) +  2s \frac{\alpha -3}{\alpha -1}k \left( \Theta (x_k) -\Theta(x^{*}) \right) \leq  \mathcal G (k) + 2s\left( k + \alpha -1\right)    \left\langle g_k, z_{k+1}- x^{*}  \right\rangle. 
\end{equation*}
By  (\ref{algo33}), we know that the sequence $ (z_{k})$ is bounded.
Summing the above inequalities, and using  $\alpha >3$, we obtain
\begin{equation} \label{algo-conv1}
\sum_k    k \left((\Phi + \Psi)(x_k) - \inf(\Phi + \Psi) \right)   < + \infty ,
\end{equation}
thats' item $i)$. 

\medskip

\textbf{Step 2.} 
Now apply the fundamental inequality (\ref{algo14}), which  can be equivalently written as follows
\begin{equation}\label{algo-conv2}
 \Theta_k (y -s G_{s,k} (y)) + \frac{1}{2s}\| y- sG_{s,k} (y)-x \|^2      \leq \Theta_k (x) + \frac{1}{2s} \|  x-y \|^2 .
\end{equation}
Take $y= y_k$, and $x=x_k$. Since $x_{k+1} = y_k - s G_{s,k} (y_k)$, and $y_k -   x_{k} = \frac{k -1}{k  + \alpha -1} ( x_{k}  - x_{k-1}) $, we  obtain
\begin{equation}\label{algo-conv3}
 \Theta_k (x_{k+1}) + \frac{1}{2s}\| x_{k+1}-x_k \|^2      \leq \Theta_k (x_k) + \frac{1}{2s}\frac{(k -1)^2}{(k  + \alpha -1)^2}  \|  x_{k}  - x_{k-1} \|^2 .
\end{equation}
Equivalently, by definition of $ \Theta_k $,
\begin{equation}\label{algo-conv4}
 \Theta (x_{k+1}) + \frac{1}{2s}\| x_{k+1}-x_k \|^2      \leq \Theta (x_k) + \frac{1}{2s}\frac{(k -1)^2}{(k  + \alpha -1)^2}  \|  x_{k}  - x_{k-1} \|^2 + \left\langle 
  g_k ,  x_{k+1}-x_k                  \right\rangle .
\end{equation}
To shorten notations, set $\theta_k = \Theta (x_k) -\Theta (x^{*}) $, 
$d_k = \frac{1}{2}\|  x_{k}  - x_{k-1} \|^2 $, $a=\alpha -1$.
By Cauchy-Schwarz inequality, and with these notations,
(\ref{algo-conv4}) gives
\begin{equation}\label{algo-conv5}
\frac{1}{s}\left( d_{k+1} - \frac{(k -1)^2}{(k + a)^2} d_k \right)  \leq \left( \theta_k - \theta_{k+1} \right) +
 \| g_k\| \|  x_{k+1}-x_k \|    .
\end{equation}
After multiplication by $(k + a)^2$, we obtain
\begin{equation}\label{algo-conv6}
\frac{1}{s}\left( (k + a)^2 d_{k+1} - (k -1)^2 d_k \right)  \leq (k + a)^2\left( \theta_k - \theta_{k+1} \right) +
(k + a)^2 \| g_k\| \|  x_{k+1}-x_k \|    .
\end{equation}
Summing from $k=1$ to $k= K$ gives
\begin{equation}\label{algo-conv7}
\sum_{k=1}^{K}\left( (k + a)^2 d_{k+1} - (k -1)^2 d_k \right)  \leq s\sum_{k=1}^{K}(k + a)^2\left( \theta_k - \theta_{k+1} \right) +s\sum_{k=1}^{K}
(k + a)^2 \| g_k\| \|  x_{k+1}-x_k \|    .
\end{equation}
By a similar computation as in Chambolle-Dossal \cite[Corollary 2]{CD}, we equivalently obtain
\begin{align}\label{algo-conv8}
(K+a)^2  d_{K+1}  + &    \sum_{k=2}^{K}a\left( 2k +a -2\right)d_k  \\
&\leq s \left( (a+1)^2 \theta_1 - (K+a)^2 \theta_{K+1} +\sum_{k=2}^{K}\left( 2k +2a -1 \right)\theta_k +\sum_{k=1}^{K}
(k + a)^2 \| g_k\| \|  x_{k+1}-x_k \|  \right)   \nonumber.
\end{align}
By (\ref{algo-conv1}) we have 
$\sum_k    \left( 2k +2a -1 \right)\theta_k < + \infty $.
Hence there exists some constant $C$ such that, for all
$ K \in \mathbb N$
\begin{equation}\label{algo-conv9}
(K+a)^2  \|  x_{K+1}-x_K \|^2 
\leq C +  2s\sum_{k=1}^{K}
(k + a)^2 \| g_k\| \|  x_{k+1}-x_k \|    .
\end{equation}
We now proceed to a parallel argument to that used in the proof of Theorem \ref{Thm-weak-conv}. Let us write (\ref{algo-conv9}) as follows, with $r_k:= (k + a)\|  x_{k+1}-x_k \| $
\begin{equation}\label{algo-conv10}
r_{k}^2 
\leq C +  2s\sum_{j=1}^{k}
(j + a)\| g_j\| r_j   .
\end{equation}
We make appeal to the following discrete version of the Gronwall-Bellman lemma.
\begin{lemma}\label{Gronw-dis} Let $(r_k)$ be sequence of positive real numbers such that, for all $k\geq 1$
$$
r_{k}^2 
\leq C +  \sum_{j=1}^{k}
\omega_j r_j
$$
where $C$ is a positive constant, and $\sum_k \omega_j <+\infty$, with $\omega_j \geq 0$.
 Then the sequence $(r_k)$  is bounded with
$$r_{k} \leq \sqrt{C} + \sum_{j \in \mathbb N} \omega_j  .$$
 \end{lemma}
\begin{proof}
For simplicity, let us assume $\omega_j >0$ (one can always reduce to this situation by adding some positive constant, arbitrarily small, see Brezis \cite{Bre1} for the proof of this lemma in the continuous case).
Set $A_k := C + \sum_{j=1}^{k}
\omega_j r_j$, $A_0 = C$. We have 
$r_{k}^2 \leq A_k $, and $A_{k+1} -A_k =  \omega_{k+1}  r_{k+1}$. Equivalently $   r_{k+1} = \frac{A_{k+1} -A_k}{\omega_{k+1}}$, which gives 
$$
\frac{A_{k+1} -A_k}{\omega_{k+1}} \leq \sqrt{A_{k+1}},
$$
and hence
$$
\frac{A_{k+1}}{\sqrt{A_{k+1}}} - \frac{A_{k}}{\sqrt{A_{k+1}}}\leq \omega_{k+1}.
$$
From this, and using that the sequence $(A_{k})$ is increasing, we deduce that
$$
\sqrt{A_{k+1}} -  \sqrt{A_{k}} \leq \omega_{k+1}.
$$
Summing this inequality, and using $r_{k} \leq \sqrt{A_{k}}  $ gives the claim.
\end{proof}

\textit{Following the proof of Theorem \ref{Thm-algo2}.}
Let us apply lemma \ref{Gronw-dis} to inequality (\ref{algo-conv10}) with  $r_j = (j + a)\|  x_{j+1}-x_j \|$, and $\omega_j  = (j + a)\| g_j\| $. By using the  assumption on the perturbation term $\sum_k  k \|g_k\| <+\infty$, we deduce that 
\begin{equation}\label{algo-conv12}
\sup_k  k \|  x_{k+1}-x_k \|  < + \infty   .
\end{equation}
Injecting this information in (\ref{algo-conv8}), we obtain
\begin{equation}\label{algo-conv13}
 \sum_{k}a\left( 2k +a -2\right)d_k  
\leq C +   \sum_{k}\left( 2k +2a -1 \right)\theta_k +\sup_k ( (k+a) \|  x_{k+1}-x_k \| ) \sum_{k}
(k + a) \| g_k \|     .
\end{equation}
From $a = \alpha -1 \geq 2$, (\ref{algo-conv1}), and the definition of $d_k$, we deduce that
$$ \sum k\|x_{k+1}-x_k\|^2<+\infty,$$ 
which is our claim $ii)$.

\medskip

\textbf{Step 3.} The last step consists in applying Opial's lemma, whose discrete version is stated below.

\begin{lemma}\label{Opial-discrete} Let $S$ be a non empty subset of $\mathcal H$,
and $(x_k)$ a sequence of elements of $\mathcal H$. Assume that 
\begin{eqnarray*}
(i) &  & \mbox{for every }z\in S,\>\lim_{k\to+\infty}\|x_k-z\|\mbox{ exists};\\
(ii) &  & 
\mbox{every weak sequential cluster point of the sequence} \ (x_k) \mbox{ belongs to }S.
\end{eqnarray*}
 Then 
\[
w-\lim_{k\to+\infty}x_k=x_{\infty}\ \ \mbox{ exists, for some element }x_{\infty}\in S.
\]
 \end{lemma}
We are going to apply Opial's lemma with $S=\argmin (\Phi + \Psi)$. 
By Theorem \ref{Thm-algo}, we have $(\Phi + \Psi)(x_k) \to 
\min (\Phi + \Psi)$ (indeed, we have proved fast convergence). By the lower semicontinuity property of $\Phi + \Psi$ for the weak convergence of $\mathcal H$, we immediately obtain that item $(ii)$ of Opial's lemma is satisfied.
Thus the only point  to verify is that $\lim \|x_k-x^{*}\|$ exists for any $x^{*} \in \argmin (\Phi + \Psi)$. Equivalently, we are going to show that $\lim h_k$ exists, with $h_k := \frac{1}{2} \|x_k  - x^{*}\|^2$.

The beginning of the proof is similar to \cite{AA1}, \cite{CD}. It consists in establishing a discrete version of the second-order differential inequality
(\ref{wconv50})
\begin{equation*}
 \ddot{h}(t) + \frac{\alpha}{t} \dot{h}(t) \leq \| \dot{x}(t) \|^2  +       \| x(t) - x^{*} \|  \| g(t) \|.
\end{equation*}
We use the  parallelogram identity, which in an equivalent form can be written as follows: for any  $a,b,c \in \mathcal H$
\begin{equation}\label{algo-paral}
\frac{1}{2} \|a-b  \|^2 + \frac{1}{2} \|a-c  \|^2 =
\frac{1}{2} \|b -c \|^2  + \left\langle a-b, a-c \right\rangle .
\end{equation} 
Taking $b= x^{*}$, $a= x_{k+1}$, $c=x_k$, we obtain
$$
\frac{1}{2} \|x_{k+1}-x^{*}  \|^2 + \frac{1}{2} \|x_{k+1}-x_k  \|^2 =
\frac{1}{2} \|  x_k - x^{*}\|^2  + \left\langle x_{k+1}-x^{*}, x_{k+1}-x_k\right\rangle .
$$ 
Equivalently,
\begin{equation}\label{algo-conv14}
h_k - h_{k+1} = \frac{1}{2} \|x_{k+1}-x_k  \|^2  + \left\langle x_{k+1}-x^{*}, x_k - x_{k+1}\right\rangle .
\end{equation}
By definition of $y_k$ we have 
$$
x_k - x_{k+1}= y_k - x_{k+1} - \frac{k -1}{k + \alpha -1}
(x_k - x_{k-1}).
$$
Replacing in (\ref{algo-conv14}), we obtain
\begin{equation}\label{algo-conv15}
h_k - h_{k+1} = \frac{1}{2} \|x_{k+1}-x_k  \|^2  + \left\langle x_{k+1}-x^{*},y_k - x_{k+1}\right\rangle -
\frac{k -1}{k + \alpha -1}\left\langle x_{k+1}-x^{*},x_k - x_{k-1}\right\rangle .
\end{equation}
Let us now use the monotonicity property of $\partial \Phi$. Since $- s\nabla \Psi (x^{*}) \in  s\partial \Phi ( x^{*})    $, and $y_k - x_{k+1}- s \nabla \Psi (y_k) +sg_k\in  s\partial \Phi (x_{k+1})$, we have
$$
\left\langle y_k - x_{k+1}- s \nabla \Psi (y_k) +sg_k +s\nabla \Psi (x^{*}) , x_{k+1} -  x^{*}   \right\rangle \geq 0.
$$
Equivalently
$$
\left\langle y_k - x_{k+1}, x_{k+1} -  x^{*} \right\rangle + s \left\langle \nabla \Psi (x^{*}) - \nabla \Psi (y_k) +g_k   , x_{k+1} -  x^{*}   \right\rangle \geq 0.
$$
Replacing in (\ref{algo-conv15}) we obtain
\begin{equation}\label{algo-conv16}
 h_{k+1}- h_k  +\frac{1}{2} \|x_{k+1}-x_k  \|^2  + s \left\langle  \nabla \Psi (y_k) -\nabla \Psi (x^{*}) - g_k   , x_{k+1} -  x^{*}   \right\rangle -
\frac{k -1}{k + \alpha -1}\left\langle x_{k+1}-x^{*},x_k - x_{k-1}\right\rangle \leq 0.
\end{equation}
We now use the co-coercivity of $\nabla \Psi$
\begin{align}\label{algo-conv17}
\left\langle  \nabla \Psi (y_k) -\nabla \Psi (x^{*})  , x_{k+1} -  x^{*}   \right\rangle  &= \left\langle  \nabla \Psi (y_k) -\nabla \Psi (x^{*})    , x_{k+1} -  y_k   \right\rangle + \left\langle  \nabla \Psi (y_k) -\nabla \Psi (x^{*})    ,   y_k  -x^{*} \right\rangle
\nonumber\\
&\geq \frac{1}{L} \|  \Psi (y_k) -\nabla \Psi (x^{*})  \| ^2 +  \left\langle  \nabla \Psi (y_k) -\nabla \Psi (x^{*})    , x_{k+1} -  y_k   \right\rangle \nonumber \\
&\geq \frac{1}{L} \|  \Psi (y_k) -\nabla \Psi (x^{*})  \| ^2 -  \|  \nabla \Psi (y_k) -\nabla \Psi (x^{*})  \|  \|x_{k+1} -  y_k    \|\\
&\geq -\frac{L}{2} \| x_{k+1} -  y_k \| ^2 
\nonumber.
\end{align}
Combining (\ref{algo-conv16}) and (\ref{algo-conv17})
\begin{equation}\label{algo-conv18}
 h_{k+1}- h_k  +\frac{1}{2} \|x_{k+1}-x_k  \|^2  -\frac{sL}{2} \| x_{k+1} -  y_k \| ^2 - s\|g_k \| \|x_{k+1} -  x^{*}\| -
\frac{k -1}{k + \alpha -1}\left\langle x_{k+1}-x^{*},x_k - x_{k-1}\right\rangle \leq 0.
\end{equation}
Let us use again (\ref{algo-paral}) with $b= x^{*}$, $a= x_{k}$, $c=x_{k -1}$. We obtain
\begin{equation*}
\frac{1}{2} \|x_{k}-x^{*}  \|^2 + \frac{1}{2} \|x_{k}-x_{k -1} \|^2 =
\frac{1}{2} \|x_{k -1} -x^{*} \|^2  + \left\langle x_{k}-x^{*}, x_{k}-x_{k -1} \right\rangle .
\end{equation*} 
Equivalently
\begin{equation}\label{algo-paral-3}
 h_{k-1}-h_{k}  = \frac{1}{2} \|x_{k}-x_{k -1}  \|^2
- \left\langle x_{k}-x^{*}, x_{k}-x_{k -1} \right\rangle .
\end{equation} 
Combining (\ref{algo-conv18}) with (\ref{algo-paral-3}) we obtain
\begin{align}\label{algo-conv19}
 h_{k+1}- h_k  - &\frac{k -1}{k + \alpha -1}\left(  h_{k} -h_{k-1}\right) \leq
 -\frac{1}{2} \|x_{k+1}-x_k  \|^2  +\frac{sL}{2} \| x_{k+1} -  y_k \| ^2 + s\|g_k \| \|x_{k+1} -  x^{*}\|\\
 & +\frac{k -1}{k + \alpha -1}\left(\frac{1}{2} \|x_k -x_{k -1}  \|^2 + \left\langle x_k - x_{k-1}, x_{k+1}-x_k\right\rangle            \right) .\nonumber
\end{align}
By definition of $y_k =     x_{k} + \frac{k -1}{k  + \alpha -1} ( x_{k}  - x_{k-1})$, we have 
$x_{k+1}- y_k =   x_{k+1} - x_{k} - \frac{k -1}{k  + \alpha -1} ( x_{k}  - x_{k-1})$. Hence
$$
\|x_{k+1}- y_k \|^2=  \| x_{k+1} - x_{k}\|^2 + \left( \frac{k -1}{k  + \alpha -1} \right) ^2 \|x_{k}  - x_{k-1}\|^2 -2\frac{k -1}{k  + \alpha -1} \left\langle x_{k+1} - x_{k}, x_{k}  - x_{k-1}\right\rangle 
$$
Substituting in (\ref{algo-conv19}), we obtain
\begin{equation}\label{algo-conv20}
 h_{k+1}- h_k  - \gamma_k \left(  h_{k} -h_{k-1}\right) \leq 
 -(1-  \frac{sL}{2}) \| x_{k+1} -  y_k \| ^2 + s\|g_k \| \|x_{k+1} -  x^{*}\|   + \left( \gamma_k + {\gamma_k}^2 \right)\|x_{k}  - x_{k-1}\|^2 ,
\end{equation}
where $\gamma_k= \frac{k -1}{k  + \alpha -1} $.
Since $0< s < \frac{1}{L} $, we have $(1-  \frac{sL}{2}) >0$. On the other hand, since $\gamma_k <1$, we have 
$\gamma_k + {\gamma_k}^2 < 2 \gamma_k$. Hence
\begin{equation}\label{algo-conv21}
 h_{k+1}- h_k  - \gamma_k \left(  h_{k} -h_{k-1}\right) \leq 
  s\|g_k \| \|x_{k+1} -  x^{*}\| + 2 \gamma_k  \|x_{k}  - x_{k-1}\|^2 .
\end{equation}
By (\ref{algo33}), we know that the sequence $(z_k)$ is bounded. By (\ref{algo-conv12}), we know that  $
\sup_k  k \|  x_{k+1}-x_k \|  < + \infty $  .
Since $x_k = z_k - \frac{k + \alpha -1}{ \alpha -1}  ( x_{k+1}-x_k )$, we deduce that the sequence $(x_k)$ is bounded.
Returning to (\ref{algo-conv21}), we have, for some constant $C$
\begin{equation}\label{algo-conv22}
 h_{k+1}- h_k  - \gamma_k \left(  h_{k} -h_{k-1}\right) \leq 
  C\|g_k \| + 2 \gamma_k  \|x_{k}  - x_{k-1}\|^2 .
\end{equation}
We now use the estimation that we  obtained in step 2, namely $ \sum_k k\|x_{k+1}-x_k\|^2<+\infty$.
Combined with the assumption 
$ \sum_{k }  k \|g_k\| < + \infty  $, we deduce that
\begin{equation}\label{algo-conv23}
 h_{k+1}- h_k  - \gamma_k \left(  h_{k} -h_{k-1}\right) \leq  \omega_k ,
\end{equation}
for some nonnegative sequence $(\omega_k)  $ such that 
$  \sum_{k \in \mathbb N} k\omega_k < + \infty $.
Taking the positive part, we obtain
\begin{equation}\label{algo-conv24}
\left(  h_{k+1}- h_k\right)^+  - \gamma_k \left(  h_{k} -h_{k-1}\right)^+ \leq  \omega_k .
\end{equation}
We are now using the following lemma, which is a discrete version of lemma \ref{basic-edo}.
\begin{lemma}\label{diff-ineq-disc} Let $(a_k)$ be sequence of nonnegative real numbers such that, for all $k\geq 1$
$$
a_{k+1} 
\leq \frac{k -1}{k  + \alpha -1}a_k + 
\omega_k
$$
where $\alpha \geq 3$, and $\sum_k k\omega_k <+\infty$, with $\omega_k \geq 0$.
 Then the sequence $(a_k)$  is summable, i.e., 
$$ \sum_{k \in \mathbb N} a_k < +\infty  .$$
 \end{lemma}
\begin{proof}
Since $\alpha \geq 3$ we have $\alpha -1\geq 2$, and hence
$$
a_{k+1} 
\leq \frac{k -1}{k  + 2}a_k + \omega_k .
$$
Multiplying this expression by $(k+1)^2$, we obtain
$$
(k+1)^2 a_{k+1} 
\leq \frac{(k -1)(k+1)^2}{k  + 2}a_k + (k+1)^2\omega_k .
$$
Then note that, for all integer $k$
$$
\frac{(k -1)(k+1)^2}{k  + 2} \leq k^2 .
$$
Hence
$$
(k+1)^2 a_{k+1} 
\leq k^2 a_k + (k+1)^2\omega_k .
$$
Summing this inequality with respect to $j=1,2,...,k$, we obtain
$$
k^2 a_{k} 
\leq  a_1 + \sum_{j=1}^{k-1}(j+1)^2\omega_j .
$$
Dividing by $k^2$, and summing with respect to $k$, we obtain
$$
\sum _k a_{k} 
\leq  a_1 \sum_k \frac{1}{k^2}    + \sum_k \frac{1}{k^2}\sum_{j=1}^{k-1}(j+1)^2\omega_j .
$$
Applying Fubini theorem to this last sum, we obtain
$$
\sum_k a_{k} 
\leq  a_1 \sum_k \frac{1}{k^2}    + \sum_j \left( \sum_{k=j+1}^{\infty}\frac{1}{k^2} \right) (j+1)^2\omega_j .
$$
We have
$$
\sum_{k=j+1}^{\infty}\frac{1}{k^2} \leq \int_{j}^{\infty}\frac{1}{t^2}dt = \frac{1}{j}.
$$
Hence
$$
\sum_k a_{k} 
\leq  a_1 \sum \frac{1}{k^2}    + \sum_j \frac{(j+1)^2}{j} \omega_j <+\infty,
$$
which by $\frac{(j+1)^2}{j} \leq 4j$ for $j\geq 1$ gives the claim.
\end{proof}
\textit{End of the proof of Theorem \ref{Thm-algo2}.}
Let us apply lemma \ref{diff-ineq-disc} with 
$a_k = \left(  h_{k} -h_{k-1}\right)^+$.    We obtain
$$\sum_{k }\left(  h_{k} -h_{k-1}\right)^+ < +\infty ,$$
which, combined with $h_k$ nonnegative,  gives the convergence of the sequence $(h_k)$, and ends the proof.
\end{proof}

\smallskip

\end{document}